\documentclass[a4paper]{article}
\usepackage{amsrefs,amssymb,amsmath,amsthm,amsfonts,graphicx,epsfig}
\usepackage[dvips]{color}
\usepackage[utf8]{inputenc}
\usepackage[english]{babel}

\theoremstyle{plain}
\newtheorem{teo}{Theorem}[section]

\newtheorem{lemma}[teo]{Lemma}
\newtheorem{prop}[teo]{Proposition}

\newtheorem{obs}[teo]{Remark}

\theoremstyle{definition}
\newtheorem{df}[teo]{Definition}

\theoremstyle{remark}

\DeclareMathOperator{\dist}{dist} 
\DeclareMathOperator{\tdist}{\tilde\dist} 

\DeclareMathOperator{\orb}{orb}
\newcommand{\reparam}{\mathcal{H}^+}
\newcommand{\reparamx}{\mathcal{H}}
\newcommand{\rep}{\mathcal{H}}

\newcommand{\R}    {\mathbb R}

\newcommand{\K}    {\mathbb K}

\newcommand{\N}  {\mathbb N}

\renewcommand{\epsilon}{\varepsilon}

\renewcommand{\phi}{\Phi}
\begin{document}

\author{Alfonso Artigue
\footnote{Email: artigue@unorte.edu.uy. Adress: Departamento de Matemática y Estadística del Litoral, Universidad de la Rep\'ublica, Gral. Rivera 1530, Salto, Uruguay.}
}

\title{Positive expansive flows}
\date{\today}
\maketitle
\begin{abstract}
We show that every positive expansive flow on a compact 
metric space consists of a finite number of periodic orbits and fixed points. 
\end{abstract}
\section*{Introduction}

In the subject of discrete dynamical systems, a homeomorphisms $f\colon X\to X$ on a compact metric space $X$ is said to be 
\emph{positive expansive} if there is $\alpha>0$ such that if $\dist(f^n(x),f^n(y))<\alpha$ for all $n\geq 0$ then $x=y$. 
It is well known that if $X$ admits a positive expansive homeomorphisms then $X$ is finite, see for example \cites{Lew,CK}. 
Here we will show the corresponding result for positive expansive flows. 
We will consider the definition of R. Bowen and P. Walters \cite{BW} for expansive 
flows without singularities (see Definition \ref{BWexp}) and 
the definition of M. Komuro \cite{K} for flows with singular points (see Definition \ref{komuroexp}). 
In both cases we show that every positive expansive flow has only a finite number of orbits being each one compact, 
i.e. periodic or singular (Theorems \ref{teo1} and \ref{teo2}).

The proofs known to the author, in the discrete case, start showing that every point is Lyapunov stable for $f^{-1}$, that is, for all 
$x\in X$ and $\epsilon>0$ there is $\delta>0$ such that 
if $\dist(x,y)<\delta$ then $\dist(f^{-n}(x),f^{-n}(y))<\epsilon$ for all $n\geq 0$. 
Let us recall how this is proved in \cite{Lew}. 
By contradiction suppose that $x\in X$ is not stable for $f^{-1}$. 
So there is $\epsilon\in (0,\alpha)$ and a sequence $y_j\to x$ as $j\to\infty$ 
such that for all $j\in\N$ there is $n_j\in \N$, $n_j\to\infty$, with the property 
$$\dist(f^{-n_j-1}(y_j),f^{-n_j-1}(x))\geq\epsilon$$
and 
$$\dist(f^{-n}(y_j),f^{-n}(x))<\epsilon$$
for all $n=0,1,\dots, n_j$.
Since $f$ is continuous and $X$ is compact there is $\sigma>0$ such that $\dist(f^{-n_j}(y_j),f^{-n_j}(x))\geq\sigma$ for all $j\in\N$. 
Assuming that $f^{-n_j}(y_j)\to y_ *$ and $f^{-n_j}(x)\to x_ *$ we have that $\dist(y_*,x_*)\geq\sigma$ and $x_*\neq y_*$.
And by continuity we have that 
$$\dist(f^n(x_*),f^n(y_*))\leq\epsilon$$ 
for all $n\geq 0$, contradicting the positive expansiveness of $f$.

In the continuous case we consider positive expansive flows allowing reparameterizations, see Definition \ref{BWexp}. 
So, as in \cite{Ma}, we consider the concept of Lyapunov stability allowing reparameterizations, see Definition \ref{dfstable}. 
Following the ideas of the discrete case we will prove in Lemma \ref{lemaestabilidad} that every point of a 
positive expansive flow $\phi$ is stable for the inverse flow $\phi^{-1}$. 
The sketch of the proof, for flows without singularities, is the following. 
By contradiction suppose that $x$ is not stable for $\phi^{-1}$. 
So, there is $\epsilon>0$ and $y_j\to x$ 
such that for every reparameterization $h\colon\R\to\R$ and for all $j\geq 0$ there is $t_j\geq 0$ with the property 
$$\dist(\phi^{-1}_{t_j}(y_j),\phi^{-1}_{h(t_j)}(x))=\epsilon$$ 
and 
$$\dist(\phi^{-1}_{t}(y_j),\phi^{-1}_{h(t)}(x))<\epsilon$$
for all $t\in [0,t_j]$.
Now one must notice that a reprameterization may be too \emph{fast} or too \emph{slow}, allowing a \emph{kinematic} 
separation of the trajectories. 
So we will consider a reparameterization $h_j$
that keep the trajectories at a distance smaller than $\epsilon$ for all $t$ in a \emph{maximal} interval $[0,t_j]$.
Assuming that $a_j=\phi^{-1}_{h_j(t)}(x)\to x_ *$ and $b_j=\phi^{-1}_{t_j}(y_j)\to y_*$
we have that $\dist(y_*,x_*)=\epsilon$ and then $x_*\neq y_*$. 
Now what we can prove about this two points is that there is a reparameterization 
$h^*$ such that $\dist(\phi^{-1}_{h^*(t)}x_*,\phi^{-1}_ty_*)\leq \epsilon'$ for all $t \geq 0$, 
being $\epsilon'$ a bit greater than $\epsilon$ but smaller than the expansive constant.
According to the definition of positive expansive flow
we have that $x_*$ and $y_*$ are in a small 
orbit segment. 
Now the maximality of $t_j$ will be contradicted as follows. 
Consider a flow box around the orbit segment containing $x_*$ and $y_*$ as in Figure \ref{flowbox}. 
\begin{figure}[htbp]
\begin{center}   
    \includegraphics{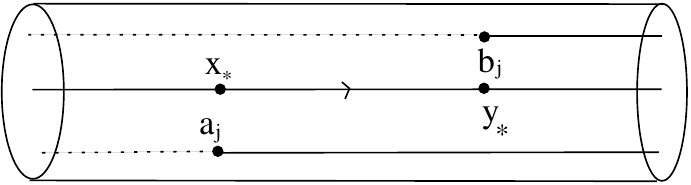}
    \caption{Flow box.}
    \label{flowbox}
\end{center}
\end{figure}

We will show in Section \ref{secthausdorff} that, eventually changing the metric to an equivalent one, 
we have that $\dist(a_j,\phi_{-t} b_j)\leq \dist(a_j,b_j)$ for all $t\geq 0$ suficiently small.
Notice that this is not true in general: 
consider in the Euclidean plane a trajectory like the graph of the function $f(x)=x\sin(1/x)$ with $f(0)=0$.
For vector fields on manifolds a Riemannian metric is enough. 

In Section \ref{sectexpflow} we consider \emph{reparameterizations with rests}, i.e. continuous and surjective maps $h\colon\R\to\R$ such that $h(s)\leq h(t)$ if $s<t$. 
We show that expansiveness and stability can be redefined using this kind of reparameterizations. 
See Propositions \ref{posexpequiv1} and \ref{posexpequiv2} and Remark \ref{stabilityequiv}.
That allows us to get a contradiction because now one can extend a bit the (supposed) maximal time $t_j$ keeping the other point in rest. 
This concludes the sketch of the proof of the stability. 
Similar techniques allow us to prove that the stability is asymptotic and uniform (Lemma \ref{uniform}).

In the discrete case, once one proves the stability, there are different continuations of the proof. 
Our strategy for the continuous case is to prove that periodic orbits do exist, Lemma \ref{periodicas}.  
Then we prove that every orbit is periodic as follows. 
By contradiction suppose that $x\in X$ is not periodic. 
In the $\omega$-limit set of $x$ there is a periodic orbit $\gamma$ because $\omega(x)$ is a compact invariant positive expansive set.
But this contradicts the past asymptotic stability of $\gamma$. 
So, every orbit is periodic, and using the asymptotic stability, we have that 
the number of periodic orbits is finite. 
This concludes the sketch of the proof.

If the flow has singular points the proof is reduced easily to the regular case, this is done in Section \ref{secsing}.

In \cite{Pa} expansive flows on manifolds are studied. There it is shown that every point has a non-trivial stable set. 
The stable set is defined using reparameterizations. So, it implies that positive expansive flows do not exist on compact manifolds 
of dimension greater than one. Their techniques seems to be adaptable for locally connected metric spaces.

\section{Hausdorff distance for a flow}\label{secthausdorff}
In that Section we consider a continuous flow on a compact metric space. 
We construct a metric that is equivalent with the original one and it has \emph{good} properties relative to the flow. 

Let $(X,\dist)$ be a compact metric space. 
Consider $\K$ the set of compact subsets of $X$ equipped with the Hausdorff distance defined by 
\[
 \dist_H(K_1,K_2)=\inf\{\epsilon>0: K_1\subset B_\epsilon(K_2)\hbox{ and } K_2\subset B_\epsilon(K_1)\}.
\]
where $K_1$ and $K_2$ are compact subsets of $X$ and $B_\epsilon(K)=\cup_{p\in K} B_\epsilon(p)$.
It is known that $(\K,\dist_H)$ is a compact metric space. 

Let $\phi\colon X\times\R\to X$ be a continuous flow. 
Denote by $I$ the real interval $[-1,1]$ and
for any $\tau>0$ define $I\tau=[-\tau,\tau]$. Consider the map 
$\phi_{I\tau}\colon X\to\K$ that associates to each point its $I\tau$-orbit segment 
\[
 \phi_{I\tau}(x)=\{\phi_tx:|t|\leq\tau\}.
\]

\begin{prop}
 For every $\tau>0$ the map $\phi_{I\tau}$ is uniformly continuous.
\end{prop}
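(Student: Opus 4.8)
The plan is to reduce the statement to the uniform continuity of the flow map $\phi$ on the compact set $X\times I\tau$. First note that $\phi_{I\tau}(x)$ is indeed compact: it is the image of $[-\tau,\tau]$ under the continuous map $t\mapsto\phi_tx$, so $\phi_{I\tau}$ really is a map $X\to\K$. Now fix $\tau>0$ and $\epsilon>0$. Since $X\times[-\tau,\tau]$ is compact and $\phi$ is continuous, $\phi$ restricted to $X\times I\tau$ is uniformly continuous; in particular there is $\delta>0$ such that $\dist(x,y)<\delta$ implies $\dist(\phi_tx,\phi_ty)<\epsilon$ for every $t$ with $|t|\leq\tau$. (I only need the comparison at equal times, so I keep the time coordinate fixed rather than using the full uniform continuity.)

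Next I would check that this $\delta$ witnesses the uniform continuity of $\phi_{I\tau}$. Take $x,y\in X$ with $\dist(x,y)<\delta$. Given any $p\in\phi_{I\tau}(x)$, write $p=\phi_tx$ with $|t|\leq\tau$; then $\phi_ty\in\phi_{I\tau}(y)$ and $\dist(p,\phi_ty)<\epsilon$, so $p\in B_\epsilon(\phi_{I\tau}(y))$. Hence $\phi_{I\tau}(x)\subset B_\epsilon(\phi_{I\tau}(y))$, and by the symmetric argument $\phi_{I\tau}(y)\subset B_\epsilon(\phi_{I\tau}(x))$. By the definition of the Hausdorff distance this yields $\dist_H(\phi_{I\tau}(x),\phi_{I\tau}(y))\leq\epsilon$. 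If a strict inequality is wanted one simply runs the same argument with $\epsilon/2$ in place of $\epsilon$.

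There is essentially no serious obstacle here: the statement is just a repackaging of the compactness of $X$ together with the continuity of $\phi$. The only point that requires a moment's care is confirming that $\phi_{I\tau}$ takes values in $\K$ (handled above by the compactness of $[-\tau,\tau]$) and keeping track of the direction of the two inclusions defining $\dist_H$, which is symmetric in $x$ and $y$ and therefore costs nothing.
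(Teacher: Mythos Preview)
Your proof is correct and follows exactly the same approach as the paper: use the uniform continuity of the flow on $X\times I\tau$ to get $\delta$ so that $\dist(x,y)<\delta$ forces $\dist(\phi_tx,\phi_ty)<\epsilon$ for all $|t|\leq\tau$, and then read off the Hausdorff bound. You simply spell out the two inclusions defining $\dist_H$ explicitly, which the paper leaves implicit.
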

\begin{proof} By the uniform continuity of the flow on compact intervals of time, 
 we have that given $\epsilon>0$ there is $\delta>0$ such that if $\dist(x,y)<\delta$ 
then $\dist(\phi_tx,\phi_ty)<\epsilon$ for every $t\in I\tau$ and every $x,y\in X$. 
So $d_H(\phi_{I\tau}(x),\phi_{I\tau}(y))<\epsilon$ if $\dist(x,y)<\delta$.
\end{proof}

Notice that if the flow has periodic orbits with arbitrary small periods then $\phi_{I\tau}$ can not be injective. 
We do not consider singularities (i.e. equilibrium points) as periodic points. 
\begin{prop}
 The map $\phi_{I\tau}$ is injective if there are not periodic orbits of period smaller or equal than $3\tau$.
\end{prop}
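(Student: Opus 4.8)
The plan is to prove the contrapositive by analyzing when $\phi_{I\tau}(x)=\phi_{I\tau}(y)$ forces $x$ and $y$ to lie on a common periodic orbit of small period. First I would observe that $\phi_{I\tau}(x)=\phi_{I\tau}(y)$ means the two orbit segments $\{\phi_tx:|t|\le\tau\}$ and $\{\phi_ty:|t|\le\tau\}$ are equal as sets. In particular $y\in\phi_{I\tau}(x)$, so $y=\phi_{s_0}x$ for some $s_0$ with $|s_0|\le\tau$; assume without loss of generality $y\ne x$, so $s_0\ne 0$ and $x$ is not a singular point (a singular point has $\phi_{I\tau}(x)=\{x\}$, which equals $\phi_{I\tau}(y)$ only if $y=x$). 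Since $x$ is regular, the orbit of $x$ is either a non-degenerate arc or a periodic orbit.

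Next I would rule out the arc case. If the orbit of $x$ is not periodic, then $t\mapsto\phi_tx$ is injective on all of $\R$, and hence the set equality $\{\phi_tx:|t|\le\tau\}=\{\phi_ty:|t|\le\tau\}=\{\phi_{t+s_0}x:|t|\le\tau\}$ translates, via injectivity, into the equality of intervals $[-\tau,\tau]=[s_0-\tau,s_0+\tau]$, forcing $s_0=0$, a contradiction. Therefore the orbit of $x$ is periodic; let $p>0$ be its minimal period. It remains to bound $p$ in terms of $\tau$.

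The key step is the period bound. Since $y=\phi_{s_0}x$ with $0<|s_0|\le\tau$ and the orbit is periodic with minimal period $p$, we have $s_0\not\equiv 0\pmod p$, so in particular $p$ could still be large a priori — the real constraint comes from comparing the two segments. Working on the circle $\R/p\Z$, the segment $\phi_{I\tau}(x)$ is the image of the arc $[-\tau,\tau]$ and $\phi_{I\tau}(y)$ is the image of $[s_0-\tau,s_0+\tau]$; these two arcs (each of length $2\tau$, or the whole circle if $2\tau\ge p$) have equal image in $\R/p\Z$. If $2\tau<p$ the two arcs are proper sub-arcs of the circle of equal length, and equality of their images forces their endpoint pairs to coincide modulo $p$, i.e. $s_0\equiv 0\pmod p$, contradicting $0<|s_0|\le\tau<p$. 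Hence we must have $2\tau\ge p$; combined with $p\le 3\tau$ being the hypothesis we are contradicting — more precisely, we have shown $\phi_{I\tau}$ fails to be injective only when there is a periodic orbit with $p\le 2\tau\le 3\tau$ — wait, I should track the constant carefully. The honest bound from this argument is $p\le 2\tau$, which is stronger than $p\le 3\tau$; the slack $3\tau$ in the statement presumably comes from a less tight but more robust version of the argument, or is simply a safe overestimate, so I would present the $2\tau$ argument and note it implies the claim. The main obstacle is exactly this bookkeeping on the circle: handling the degenerate situation where one or both arcs wrap all the way around (when $2\tau\ge p$), and making sure the ``equal images of two equal-length arcs forces equal endpoints'' step is stated correctly — it is elementary but is the crux, and one must be careful that it genuinely uses the equality (not just containment) of the two orbit segments.

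\begin{proof}
We prove the contrapositive: if $\phi_{I\tau}$ is not injective then there is a periodic orbit of period at most $3\tau$. Suppose $\phi_{I\tau}(x)=\phi_{I\tau}(y)$ with $x\ne y$. Then $y\in\phi_{I\tau}(x)$, so $y=\phi_{s_0}x$ for some $s_0$ with $0<|s_0|\le\tau$; in particular $x$ is not singular. If the orbit of $x$ were not periodic, then $t\mapsto\phi_tx$ would be injective on $\R$, and the equality $\{\phi_tx:|t|\le\tau\}=\{\phi_{t+s_0}x:|t|\le\tau\}$ would give $[-\tau,\tau]=[s_0-\tau,s_0+\tau]$, hence $s_0=0$, a contradiction. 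So the orbit of $x$ is periodic; let $p>0$ be its minimal period. Passing to the circle $\R/p\Z$, the sets $\phi_{I\tau}(x)$ and $\phi_{I\tau}(y)$ are the images of the arcs $[-\tau,\tau]$ and $[s_0-\tau,s_0+\tau]$ respectively. If $2\tau<p$ these are proper arcs of equal length $2\tau$ in the circle, and their equality forces their endpoints to coincide, so $s_0\equiv 0\pmod p$, contradicting $0<|s_0|\le\tau<p$. Therefore $p\le 2\tau\le 3\tau$, which proves the claim.
\end{proof}
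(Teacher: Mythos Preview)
Your proof is correct and in fact yields the sharper bound $p\le 2\tau$. The paper takes a more direct route that avoids the periodic/non-periodic case split and the circle analysis: after writing $y=\phi_sx$ with $s\in(0,\tau]$, it simply observes that the endpoint $\phi_{s+\tau}x=\phi_\tau y$ lies in $\phi_{I\tau}(y)=\phi_{I\tau}(x)$, hence $\phi_{s+\tau}x=\phi_{s'}x$ for some $s'\in[-\tau,\tau]$, so $x$ has period $s+\tau-s'\in(0,3\tau]$. This one-line endpoint-tracking argument explains the constant $3\tau$ in the statement. Your approach trades the extra case analysis for the better constant; one small remark is that in your final step ``equality forces their endpoints to coincide, so $s_0\equiv 0\pmod p$'' you should also dismiss the swapped matching $-\tau\equiv s_0+\tau$, $\tau\equiv s_0-\tau\pmod p$, which would give $4\tau\equiv 0\pmod p$ and $|s_0|\equiv 2\tau$, impossible since $2\tau<p$ and $|s_0|\le\tau$.
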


\begin{proof}
 Arguing by contradiction assume that $\phi_{I\tau}(x)=\phi_{I\tau}(y)$ with $x\neq y$. It implies that $x$ is not singular.
Without loss of generality we can assume that
there is $s\in (0,\tau]$ such that $y=\phi_s(x)$. Then $\phi_{[s-\tau,s+\tau]}x=\phi_{[-\tau,\tau]}x$. 
So, $\phi_{s+\tau} x=\phi_{s'} x$ for some $s'\in I\tau$. Therefore $\phi_{s+\tau-s'}x=x$.
This is a contradiction because $0<s+\tau-s'\leq 3\tau$ and $x$ is not singular.
\end{proof}

Notice that expansive flows (with or without singular points) and flows without singular points (expansive or not) 
have not arbitrary small periods.

Assuming that $\phi_{I\tau}$ is injective we consider the following distance in $X$
\[
 \tilde \dist(x,y)=\dist_H(\phi_{I\tau}(x),\phi_{I\tau}(y)).
\]
\begin{prop}
If $\phi_{I\tau}$ is injective then the new distance $\tdist$ is equivalent with $\dist$.
\end{prop}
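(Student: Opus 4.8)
The statement asserts that $\tdist$ and $\dist$ generate the same topology on $X$, given that $\phi_{I\tau}$ is injective. The plan is to show both metrics are topologically equivalent by proving that the identity map is a homeomorphism from $(X,\dist)$ to $(X,\tdist)$; since $X$ is compact Hausdorff, it suffices to show this identity map is continuous in both directions, and continuity of a bijection from a compact space to a Hausdorff space automatically gives a homeomorphism, so in fact I only need continuity in one direction plus a compactness argument. Concretely, I would first observe that continuity of $\mathrm{id}\colon (X,\dist)\to(X,\tdist)$ is immediate from the previous proposition: $\phi_{I\tau}$ is uniformly continuous, and $\tdist(x,y)=\dist_H(\phi_{I\tau}(x),\phi_{I\tau}(y))$, so $\dist(x,y)$ small forces $\tdist(x,y)$ small.

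For the reverse direction, the key point is to use compactness rather than attempt a direct quantitative estimate. I would argue that $\mathrm{id}\colon(X,\dist)\to(X,\tdist)$ is a continuous bijection from the compact space $(X,\dist)$ onto the Hausdorff space $(X,\tdist)$ — here I should check $(X,\tdist)$ really is Hausdorff, which follows because $\tdist$ is a genuine metric: symmetry and the triangle inequality are inherited from $\dist_H$, and $\tdist(x,y)=0$ forces $\phi_{I\tau}(x)=\phi_{I\tau}(y)$, hence $x=y$ by the assumed injectivity of $\phi_{I\tau}$. A continuous bijection from a compact space to a Hausdorff space is a homeomorphism (closed sets map to compact hence closed sets), so the inverse $\mathrm{id}\colon(X,\tdist)\to(X,\dist)$ is also continuous. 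Therefore the two metrics induce the same topology, i.e. they are equivalent.

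I expect the only genuine subtlety to be the verification that $\tdist$ separates points, which is exactly where the injectivity hypothesis on $\phi_{I\tau}$ enters; everything else is a standard compactness argument. An alternative, if one wanted an explicit modulus of continuity for the inverse map (a uniform equivalence rather than just topological equivalence), would be harder: one would need to rule out the possibility that far-apart points have $I\tau$-orbit segments that are Hausdorff-close, which could conceivably fail without further hypotheses — but since the statement only claims topological equivalence, the compactness route suffices and I would take it. So the proof reduces to: (1) $\tdist$ is a metric, using injectivity of $\phi_{I\tau}$ for the positivity; (2) $\mathrm{id}\colon(X,\dist)\to(X,\tdist)$ is continuous, by uniform continuity of $\phi_{I\tau}$; (3) invoke compactness of $(X,\dist)$ and Hausdorffness of $(X,\tdist)$ to conclude the identity is a homeomorphism.
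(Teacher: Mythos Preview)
Your proposal is correct and is essentially the same argument as the paper's: both use that $\phi_{I\tau}$ is a continuous injection from the compact space $(X,\dist)$ into the Hausdorff space $(\K,\dist_H)$, and invoke the standard fact that such a map is a homeomorphism onto its image, whence the pull-back metric $\tdist$ is equivalent to $\dist$. The paper phrases this via the map $\phi_{I\tau}$ itself rather than the identity on $X$, but the content is identical.
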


\begin{proof}
 Since $\phi_{I\tau}$ is continuous and $X$ is compact, the image of $\phi_{I\tau}$ is compact. 
So $\phi_{I\tau}\colon X\to\phi_{I\tau}(X)$ is an open map and the inverse $\phi_{I\tau}^{-1}\colon \phi_{I\tau}(X)\to X$ is continuous. 
Then $(X,\dist)$ and $(\phi_{I\tau}(X),\dist_H)$ are homeomorphic. 
The distance $\tdist$ in $X$ is the pull-back of $\dist_H$ by $\phi_{I\tau}$, so 
$\dist$ and $\tdist$ are equivalent metrics in $X$.
\end{proof}

The following Propositions deals with the question that we will state now. Consider a flow box $U$ centered at $x$. Take $y$ close to $\phi_{-t_0}x$ for some $t_0>0$. 
Is it true that $\tdist(y,x)\geq \tdist(\phi_ty,x)$ for small and positive values of $t$? 
According to the arguments that we will do in the next Sections, it is enough to answer these questions for flows without singular points. 
To continue we need the following Lemma. It is stated for the inverse flow, defined as $\phi^{-1}_t=\phi_{-t}$, because in the following 
Proposition it will be used in this way.

\begin{lemma}\label{lemax}
If $\phi^{-1}_sx\neq x$ for all $x\in X$ and $s\in (0,3\tau]$ then there is $\tilde\tau>0$ such that for all $p\in X$, 
$\dist(p,\phi^{-1}_tp)<\dist(p,\phi^{-1}_{\theta+2\tau}p)$ 
for all $p\in X$ and $\theta\in [0,\tilde\tau]$.
\end{lemma}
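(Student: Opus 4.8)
The plan is to exploit a separation of scales between the two sides. The right-hand side $\dist(p,\phi^{-1}_{\theta+2\tau}p)$ should be bounded below by a fixed positive constant, uniformly in $p\in X$ and in $\theta$ ranging over a small interval, because the hypothesis forbids periodic orbits — and singular points — of period at most $3\tau$, and $X$ is compact. The left-hand side $\dist(p,\phi^{-1}_tp)$, on the other hand, can be made uniformly small by restricting $t$ to a sufficiently short interval $[0,\tilde\tau]$, since the flow is uniformly continuous on compact time intervals and $\phi^{-1}_0$ is the identity. Choosing $\tilde\tau$ small — and in particular $\tilde\tau\le\tau$, so that $\theta+2\tau$ stays inside $[2\tau,3\tau]$ — then forces the strict inequality for every $p\in X$, every $t\in[0,\tilde\tau]$ and every $\theta\in[0,\tilde\tau]$.

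Concretely, I would first consider the function $g\colon X\times[2\tau,3\tau]\to\R$ given by $g(p,s)=\dist(p,\phi^{-1}_sp)$. It is continuous (the flow and the metric are continuous), its domain is compact, and it is strictly positive: for $s\in[2\tau,3\tau]\subset(0,3\tau]$ the hypothesis gives $\phi^{-1}_sp\neq p$, hence $g(p,s)>0$. Therefore $\sigma:=\min\{g(p,s):p\in X,\ s\in[2\tau,3\tau]\}>0$. Next I would use uniform continuity of the flow on $X\times I$ to find $\tilde\tau\in(0,\tau]$ such that $\dist(p,\phi^{-1}_tp)<\sigma$ for every $p\in X$ and every $t$ with $|t|\le\tilde\tau$; this is possible because $\dist(p,\phi^{-1}_0p)=0$ for all $p$ and the convergence as $t\to 0$ is uniform on the compact space $X$. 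Finally, for $p\in X$, $t\in[0,\tilde\tau]$ and $\theta\in[0,\tilde\tau]$ one has $\theta+2\tau\in[2\tau,3\tau]$, and hence
\[
 \dist(p,\phi^{-1}_tp)<\sigma\le g(p,\theta+2\tau)=\dist(p,\phi^{-1}_{\theta+2\tau}p),
\]
which is the asserted inequality.

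There is no genuinely hard step in this Lemma; it is a compactness argument, and the real difficulty in this Section lies in the Proposition that will use it. The only point that needs a little care is keeping the two parameter ranges aligned: one needs $\tilde\tau\le\tau$ so that the ``far'' parameter $\theta+2\tau$ remains inside $[2\tau,3\tau]$, where the uniform lower bound $\sigma$ was extracted, while at the same time $\tilde\tau$ must be small enough for all the ``near'' distances $\dist(p,\phi^{-1}_tp)$ with $t\in[0,\tilde\tau]$ to lie strictly below $\sigma$. These two requirements are compatible because shrinking $\tilde\tau$ only strengthens the second one, so it suffices to first fix $\sigma$ from the periodicity hypothesis and then shrink $\tilde\tau$.
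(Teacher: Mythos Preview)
Your proof is correct. The paper argues by contradiction: assuming sequences $\theta_n\to 0$ and $p_n\to p_*$ with $\dist(p_n,\phi^{-1}_{\theta_n}p_n)\ge\dist(p_n,\phi^{-1}_{\theta_n+2\tau}p_n)$, it passes to the limit to obtain $0\ge\dist(p_*,\phi^{-1}_{2\tau}p_*)$ and hence $\phi^{-1}_{2\tau}p_*=p_*$, contradicting the hypothesis. You instead proceed directly, extracting a uniform positive lower bound $\sigma$ for the right-hand side via the extreme value theorem on the compact set $X\times[2\tau,3\tau]$, and then shrinking $\tilde\tau$ so that the left-hand side stays below $\sigma$. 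Both are one-step compactness arguments; yours is more explicit and in fact proves a slightly stronger conclusion (with $t$ and $\theta$ allowed to vary independently in $[0,\tilde\tau]$, whereas the paper's proof tacitly treats the case $t=\theta$), while the paper's version is terser.
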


\begin{proof}
By contradiction assume that there is $\theta_n>0$, $\theta_n\to 0$, and $p_n\to p_*$ 
such that $\dist(p_n,\phi^{-1}_{\theta_n}p_n)\geq \dist(p_n,\phi^{-1}_{\theta_n+2\tau}p_n)$ for all $n\geq 0$. 
Then, in the limit, we have the contradiction $\phi^{-1}_{2\tau}p_*=p_*$.
\end{proof}

Now we can prove the main result of the section. We assume that there are no periods smaller than $3\tau$.

\begin{prop}\label{propmonotonialocal}
 If $\phi$ has not singular points then for all $t_0\in (0,\tilde\tau]$ there is $\delta>0$ and $t_1\in(0,t_0)$ such that if 
$\dist(\phi_{t_o}y,x)<\delta$ and $0\leq s\leq u\leq t_1$ then $\tdist(\phi_s y,x)\geq\tdist(\phi_uy,x)$.
\end{prop}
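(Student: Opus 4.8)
The plan is to set up a flow box around the orbit of $x$, use it to control how $\tdist$ behaves as $\phi_s y$ moves toward $x$, and reduce the monotonicity statement to the infinitesimal estimate of Lemma~\ref{lemax}. First I would fix $t_0\in(0,\tilde\tau]$ and a flow box $U$ centered at $x$ with a cross-section $\Sigma$ transverse to the flow, chosen small enough that every orbit segment of length $\leq 4\tau$ starting in $U$ stays inside a slightly larger box $U'$ on which the flow has a product structure $U'\cong \Sigma\times(-a,a)$ via $(\sigma,t)\mapsto\phi_t\sigma$. Since $\phi$ has no singular points, such a box exists; here I also record the continuity of the first-return-type coordinates. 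For $y$ with $\dist(\phi_{t_0}y,x)<\delta$ (for $\delta$ small, to be fixed), the point $y$ itself lies near $\phi_{-t_0}x$, hence in $U'$, and in box coordinates $y=\phi_{\tau(y)}\sigma(y)$ with $\tau(y)$ close to $-t_0$ and $\sigma(y)$ close to the cross-section point of $x$.

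The key computation is to rewrite $\tdist(\phi_s y,x)=\dist_H(\phi_{I\tau}(\phi_s y),\phi_{I\tau}(x))$ in these coordinates. The orbit segment $\phi_{I\tau}(\phi_s y)=\phi_{[s-\tau,s+\tau]}(y)$ and the orbit segment $\phi_{I\tau}(x)=\phi_{[-\tau,\tau]}(x)$ are two short arcs lying in $U'$ and nearly parallel to the flow direction (for $\delta$ small). The Hausdorff distance between two such nearly-parallel arcs is governed, to first order, by (i) the transverse displacement between the cross-section points $\sigma(y)$ and $\sigma(x)$, which does not depend on $s$, and (ii) the longitudinal mismatch of the endpoints, which is exactly a quantity of the form $\dist(\text{endpoint of one arc},\text{nearby orbit point of the other})$ — and this is where the decreasing behavior comes from. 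Concretely, as $s$ increases from $0$ to $t_1$, the arc $\phi_{[s-\tau,s+\tau]}(y)$ slides along the flow toward $x$'s arc, so that sup over points of one arc of the distance to the other arc is realized near an endpoint and has the form $\dist(p,\phi^{-1}_{c+2\tau}p)$ versus $\dist(p,\phi^{-1}_{c'+2\tau}p)$ with $c<c'$ in $[0,\tilde\tau]$; Lemma~\ref{lemax} (applied to $\phi^{-1}$, whose hypothesis holds since there are no periods $\leq 3\tau$) gives monotonicity of such quantities. Choosing $t_1\in(0,t_0)$ small ensures all the shifts involved stay in $[0,\tilde\tau]$ and the ``nearly parallel'' approximation is valid, and choosing $\delta$ small ensures the error terms from transverse motion and from the arcs not being exactly flow segments are dominated.

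The main obstacle will be step (ii): making rigorous the claim that the Hausdorff distance between the two orbit arcs is controlled by (and monotone in the same direction as) the longitudinal endpoint quantities to which Lemma~\ref{lemax} applies, without any smoothness of the space. The danger is precisely the one flagged in the introduction with the graph of $x\sin(1/x)$: an orbit arc can oscillate transversally so that the point of $\phi_{I\tau}(\phi_s y)$ farthest from $\phi_{I\tau}(x)$ is not an endpoint and does not move monotonically. The resolution is that we are working with $\tdist$, which is itself a Hausdorff distance of orbit segments, so the ``bad'' transverse oscillation is already smoothed out one level up — one should show that the worst-case point in the Hausdorff computation can be taken at an endpoint up to an error $o(\delta)$, uniformly, using compactness and the no-small-periods hypothesis. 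Once that reduction is in hand, monotonicity in $s$ (and the two-parameter version with $0\leq s\leq u\leq t_1$) follows by comparing the endpoint quantities pairwise and invoking Lemma~\ref{lemax}; I would carry out the comparison for the two endpoints of the arc separately and take the max. I expect the bulk of the write-up to be the uniform estimate in this reduction, with the flow-box bookkeeping and the final application of Lemma~\ref{lemax} being routine.
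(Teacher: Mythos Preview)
Your plan is genuinely different from the paper's argument, and the step you yourself flag as the ``main obstacle'' is where the gap lies. You propose to decompose $\tdist(\phi_s y,x)=\dist_H(\phi_{I\tau}(\phi_s y),\phi_{I\tau}(x))$ into a transverse part (independent of $s$) and a longitudinal endpoint part (to which Lemma~\ref{lemax} applies), and then argue that the endpoint part dominates. But in a bare compact metric space there is no Riemannian splitting to make ``transverse'' and ``longitudinal'' meaningful, and your proposed resolution---that $\tdist$ ``already smooths out'' the oscillation---is circular: $\tdist$ \emph{is} the Hausdorff distance of the two arcs, so there is no second layer of smoothing to appeal to. Your claim that the sup in the Hausdorff distance can be taken at an endpoint ``up to $o(\delta)$'' is not proved, and even if true, an $o(\delta)$ error would not yield the non-strict inequality $\tdist(\phi_s y,x)\ge\tdist(\phi_u y,x)$ unless you also show the endpoint quantity itself moves by a definite amount. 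Finally, your flow box is built around a fixed $x$, while the statement requires $\delta,t_1$ depending only on $t_0$; you would still need a compactness pass for uniformity.

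The paper bypasses all of this with a direct contradiction argument. One assumes sequences $x_n,y_n$ with $x_n\to z$, $\phi_{t_0}y_n\to z$, and $0\le s_n\le u_n\to 0$ such that $\tdist(\phi_{s_n}y_n,x_n)<\tdist(\phi_{u_n}y_n,x_n)$. Unpacking the Hausdorff distance gives inclusions (a), (b) at level $\epsilon_n$ for the $s_n$-arc and a failure (c) or (d) for the $u_n$-arc. One first shows $\epsilon_n\not\to 0$ using the no-small-periods hypothesis. Then, precisely because (a) holds at the \emph{same} threshold $\epsilon_n$ that (c) or (d) fails, the offending parameter $v_n$ (resp.\ $w_n$) is forced into a window of width $u_n-s_n$ at the boundary of $I\tau$, hence converges to $\pm\tau$. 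Passing to the limit in case (c) gives an immediate contradiction $\dist(z,z)\ge\epsilon_*>0$; case (d) yields two limit inequalities that contradict Lemma~\ref{lemax}. So your endpoint intuition is vindicated, but only \emph{a posteriori}, under the contradiction hypothesis---the paper never needs to prove it as a standalone structural fact about $\tdist$, and the uniformity in $x,y$ comes for free from the sequence argument.
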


\begin{proof}
 By contradiction assume that there is $t_0\in (0,\tilde\tau]$, sequences $x_n,y_n\in X$ and $s_n,u_n\in\R$ 
such that $\phi_{t_0}y_n\to z$, $x_n\to z$, $0\leq s_n\leq u_n\to 0$ and
\begin{equation}\label{abs}
\tdist(\phi_{s_n}y_n,x_n)<\tdist(\phi_{u_n}y_n,x_n)
\end{equation}
for all $n\geq 0$. Inequality (\ref{abs}) means that there is $\epsilon_n>0$ such that 
\begin{itemize}
 \item[(a)] $\phi_{I\tau}(\phi_{s_n}y_n)\subset B_{\epsilon_n}(\phi_{I\tau}(x_n))$ and 
\item[(b)] $\phi_{I\tau}(x_n)\subset B_{\epsilon_n}(\phi_{I\tau}(\phi_{s_n}y_n))$ 
\end{itemize}
but 
\begin{itemize}
 \item [(c)] $\phi_{I\tau}(\phi_{u_n}y_n)\nsubseteq B_{\epsilon_n}(\phi_{I\tau}(x_n))$ or
\item [(d)] $\phi_{I\tau}(x_n)\nsubseteq B_{\epsilon_n}(\phi_{I\tau}(\phi_{u_n}y_n))$.
\end{itemize}

In that paragraph we show that $\epsilon_n$ does no converge to 0. 
By (a) we have that there is $w_n\in I\tau$ such that 
\begin{equation}\label{ineq*}
\dist(\phi_{-\tau+s_n}y_n,\phi_{w_n}x_n)<\epsilon_n.
\end{equation} 
Taking a subsequence we can assume that $w_n\to w_*\in I\tau$. 
Taking limit in the inequality (\ref{ineq*}) and supposing that $\epsilon_n\to 0$ we have that $\phi_{-\tau-t_0}z=\phi_{w_*}z$. 
This is a contradiction because $z=\phi_{\tau+t_0+w_*}z$ and $|\tau+t_0+w_*|<3\tau$. 
So, taking a subsequence of $\epsilon_n$, we assume that $\epsilon_n\to\epsilon_*>0$.

Assume that (c) holds. It implies that there is $v_n\in I\tau$ such that for all $t\in I\tau$ 
\begin{equation}\label{ineq1'}
 \dist(\phi_{v_n+u_n}y_n,\phi_tx_n)\geq\epsilon_n.
\end{equation}
Now we show that $v_n\to \tau$. By (a) we have that for all $s\in I\tau$, there is $t\in I\tau$ such that 
\begin{equation}\label{ineqa'}
 \dist(\phi_{s+s_n}y_n,\phi_tx_n)<\epsilon_n.
\end{equation}
Using the inequalities (\ref{ineq1'}) and (\ref{ineqa'}) we have that $s+s_n\neq v_n+u_n$ for all $s\in I\tau$. 
But $v_n\in I\tau$, so $v_n\in (\tau -(u_n-s_n),\tau]$. Then $v_n\to\tau$. 

Now, taking limit in the inequality (\ref{ineq1'}) we have that $\dist(\phi_{\tau-t_0}z,\phi_tz)\geq\epsilon_*$ for all $t\in I\tau$. 
So we can put $t=\tau-t_0$ and $\dist(z,z)\geq\epsilon_*>0$ which is a contradiction. Then (c) can not hold.

Now assume that (d) is true. Condition (d) means that there is $v_n\in I\tau$ such that for all $t\in I\tau$ we have 
\begin{equation}\label{ineq2'}
 \dist(\phi_{v_n}x_n,\phi_{t+u_n}y_n)\geq\epsilon_n.
\end{equation}
By (b) we have that there is $w_n\in I\tau$ such that 
\begin{equation}\label{ineq3}
 \dist(\phi_{v_n}x_n,\phi_{s_n+w_n}y_n)<\epsilon_n.
\end{equation}

We will show that $w_n\to -\tau$. By (\ref{ineq2'}) and (\ref{ineq3}) we have that $s_n+w_n\neq t+u_n$ for all $t\in I\tau$. 
Then $w_n\notin [-\tau+u_n-s_n,\tau+u_n-s_n]$ but $w_n\in I\tau$. Therefore $w_n\in [-\tau, -\tau +u_n-s_n)$ and $w_n\to -\tau$. 

Assuming that $v_n\to v_*\in I\tau$ and taking limit in (\ref{ineq2'}) we have that 
\begin{equation}\label{ineq4} 
\dist(\phi_{v_*}z,\phi_{t-t_0}z)\geq \epsilon_* 
\end{equation}
for all $t\in I\tau$. Also, taking limit in (\ref{ineq3}) we have 
\begin{equation}\label{ineq6} 
 \dist(\phi_{v_*}z,\phi_{-\tau-t_o}z)\leq \epsilon_*.
\end{equation}
By (\ref{ineq4}) and the fact that $\epsilon_*>0$ we have that $v_*\neq t-t_0$ for all $t\in I\tau$. 
Then $v_*\in (\tau-t_0,\tau]$. 
If $t=\tau$ in inequality (\ref{ineq4}) we have that 
$\dist(\phi_{v_*}z,\phi_{\tau-t_0}z)\geq \epsilon_*.$
This and inequality (\ref{ineq6}) contradicts Lemma \ref{lemax}, with $\theta=v_*-(\tau-t_0)$ and $p=\phi_{v_*}z$, because $t_0\in (0,\tilde\tau]$.
\end{proof}

\begin{prop}\label{propmonotonia}
For all $t_2\in(0,\tilde\tau]$ there is $\delta>0$ and $t_1>0$ such that if 
$\tdist(\phi_tx,y)<\delta$ or $\tdist(x,\phi_{-t}y)<\delta$
for some $t\in [t_2,\tilde\tau]$ and $0\leq s\leq u\leq t_1$ then $\tdist(\phi_sy,x)\geq\tdist(\phi_uy,x)$.
\end{prop}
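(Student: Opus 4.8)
The statement of Proposition~\ref{propmonotonia} is a ``robust'' version of Proposition~\ref{propmonotonialocal}: instead of controlling the situation only when $\phi_{t_0}y$ is close to $x$ for one fixed $t_0\le\tilde\tau$, we now want a uniform choice of $\delta$ and $t_1$ that works for \emph{all} $t$ in a closed subinterval $[t_2,\tilde\tau]$, and moreover we allow the hypothesis to be phrased in the new metric $\tdist$ and in either of the two symmetric forms $\tdist(\phi_tx,y)<\delta$ or $\tdist(x,\phi_{-t}y)<\delta$. The obvious approach is to apply Proposition~\ref{propmonotonialocal} pointwise and then patch the local data together by compactness.

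I would proceed as follows. First I would observe that the two hypotheses $\tdist(\phi_tx,y)<\delta$ and $\tdist(x,\phi_{-t}y)<\delta$ are essentially the same condition: $\phi_{-t}y$ close to $x$ means $y$ close to $\phi_t x$, i.e.\ $\phi_{t_0}y'\approx x'$ after renaming, so both reduce to the setup of Proposition~\ref{propmonotonialocal} with $t_0$ in the range $[t_2,\tilde\tau]$. Since $\tdist$ is equivalent to $\dist$ (Proposition stated above), smallness in $\tdist$ gives smallness in $\dist$, so the hypothesis of Proposition~\ref{propmonotonialocal} can be met. Next, for each fixed $t_0\in[t_2,\tilde\tau]$, Proposition~\ref{propmonotonialocal} furnishes $\delta(t_0)>0$ and $t_1(t_0)\in(0,t_0)$ with the desired monotonicity conclusion whenever $\dist(\phi_{t_0}y,x)<\delta(t_0)$. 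The point is to make these uniform in $t_0$. I would do this by a standard compactness/continuity argument: the map $(t_0,y)\mapsto\phi_{t_0}y$ is uniformly continuous on $[t_2,\tilde\tau]\times X$, so if $\tdist(\phi_tx,y)<\delta$ for some $t\in[t_2,\tilde\tau]$ then for any other nearby parameter $t'$ the point $\phi_{t'}y$ is still within $\delta(t')$ of $x$ provided $|t-t'|$ and $\delta$ are small enough; covering $[t_2,\tilde\tau]$ by finitely many such parameter-neighborhoods and taking $\delta=\min\delta(t_0^{(i)})$ (shrunk to absorb the continuity error) and $t_1=\min t_1(t_0^{(i)})$ over the finite subcover gives the uniform constants. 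Alternatively, and perhaps more cleanly, I would re-run the contradiction argument of Proposition~\ref{propmonotonialocal} directly: suppose there are sequences $x_n,y_n$, times $t^{(n)}\in[t_2,\tilde\tau]$ with (say) $\tdist(\phi_{t^{(n)}}x_n,y_n)\to0$, and $0\le s_n\le u_n\to0$ violating the monotonicity; pass to a subsequence so that $t^{(n)}\to t_0\in[t_2,\tilde\tau]$ and $\phi_{t^{(n)}}x_n\to z$, $y_n\to z$; then the \emph{same} limiting analysis as in Proposition~\ref{propmonotonialocal} applies verbatim with this limit parameter $t_0$, because there $t_0$ was an arbitrary element of $(0,\tilde\tau]$ and only its membership in that interval (for the final appeal to Lemma~\ref{lemax}) was used.

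The main obstacle is ensuring that the limiting time parameter $t_0$ obtained from the sequence $t^{(n)}$ still lies in the half-open interval $(0,\tilde\tau]$ that is needed to invoke Lemma~\ref{lemax} at the end of Proposition~\ref{propmonotonialocal}; this is exactly why the hypothesis requires $t\in[t_2,\tilde\tau]$ with $t_2>0$ rather than $t\in(0,\tilde\tau]$ --- the closed interval $[t_2,\tilde\tau]$ is compact and bounded away from $0$, so $t_0\in[t_2,\tilde\tau]\subset(0,\tilde\tau]$. I therefore expect the proof to be essentially a transcription of the proof of Proposition~\ref{propmonotonialocal}: introduce $\epsilon_n$ from inequality~(\ref{abs}) written in the $\tdist$ metric, show $\epsilon_n\not\to0$ (otherwise one gets a return $\phi_{-\tau-t_0}z=\phi_{w_*}z$ with $|\tau+t_0+w_*|<3\tau$, contradicting the no-small-periods assumption), then rule out the analogues of cases (c) and (d), the last of which again reduces to a contradiction with Lemma~\ref{lemax} at the limit parameter $t_0\in(0,\tilde\tau]$. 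Handling the second form of the hypothesis, $\tdist(x,\phi_{-t}y)<\delta$, requires no new idea: replace $y$ by $\phi_{-t}y$ throughout, noting $\phi_s y=\phi_{s+t}(\phi_{-t}y)$ shifts the relevant orbit-segment parameters by a bounded amount, which the flow-box argument absorbs just as before.
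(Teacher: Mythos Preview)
Your proposal is correct and matches the paper's approach exactly. The paper's entire proof is the single sentence ``It follows by Proposition~\ref{propmonotonialocal} and the compactness of the interval $[t_2,\tilde\tau]$,'' and your two suggested implementations---a finite-subcover argument taking the minimum of the local $\delta$'s and $t_1$'s, or re-running the contradiction argument of Proposition~\ref{propmonotonialocal} with a variable $t^{(n)}\to t_0\in[t_2,\tilde\tau]$---are just more detailed renderings of that same idea; your observation that the lower bound $t_2>0$ is precisely what keeps the limit parameter inside $(0,\tilde\tau]$ so that Lemma~\ref{lemax} still applies is the only point worth making explicit, and you made it.
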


\begin{proof}
 It follows by Proposition \ref{propmonotonialocal} and the compactness of the interval $[t_2,\tilde\tau]$.
\end{proof}

\section{Expansive flows}\label{sectexpflow}

In that section we present the definition of expansive flow and some useful equivalences. 
We state them for positive expansiveness but they have their counterpart for expansive flows.
We consider flows without singular points. In Section \ref{secsing} we consider the singular case.

Let $\rep^+$ be the set of all increasing homeomorphisms $h\colon\R\to\R$ such that $h(0)=0$. Such maps are called \emph{reparameterizations}.

\begin{df}\label{BWexp}
A continuous flow $\phi$ on a compact metric space $X$ is \emph{positive expansive} if for every $\epsilon>0$ there is $\delta>0$ such that 
if $\dist(\phi_{h(t)}x,\phi_ty)<\delta$ for all $t\geq 0$, with $x,y\in X$ and $h\in\rep^+$, 
then $y\in\phi_{I\tau}x$. 
\end{df}

Recall that $y\in\phi_{I\tau}x$ if and only if there is $t\in I\tau=[-\tau,\tau]$ such that $y=\phi_tx$.
This is the \emph{positive} adaptation of the definition given by R. Bowen and P. Walters in \cite{BW}.
Now we present an equivalent definition. 
Consider $\rep$ as the set of non-decreasing, surjective and continuous maps $h\colon \R\to\R$ such that $h(0)=0$. 
By \emph{non-decreasing} we mean: if $s<t$ then $h(s)\leq h(t)$. 
The idea is to allow a point to stop the clock for a while (recall that in \cite{Ma} reparameterizations are called \emph{clocks}).
The maps of $\rep$ will be called \emph{reparameterizations with rests}.

Define the set of pairs of reparameterizations with rests 
$$\rep^2=\{g=(h_1,h_2):h_1,h_2\in \rep\}$$ 
and extend de action of $\phi$ to $X\times X$ as $\phi_t(x,y)=(\phi_tx,\phi_ty)$.
Also we define 
$$\phi_{g(t)}(x,y)=(\phi_{h_1(t)}x,\phi_{h_2(t)}y)$$ 
for $g=(h_1,h_2)\in\rep^2$.
We now consider the Fréchet distance defined by
\[
 \dist_F(x,y)=\inf_{g\in\rep^2}\sup_{t\geq 0}\dist(\phi_{g(t)}(x,y)).
\]
This distance was introduced in \cite{Fre} in the begining of the Theory of metric spaces. 
It was first defined for compact curves but, as noticed in \cite{Ma}, it can be extended to non-compact 
trajectories. 


\begin{prop}\label{posexpequiv1}
A flow $\phi$ is positive expansive if and only if
for all $\epsilon>0$ there is $\delta>0$ such that if $\dist_F(x,y)<\delta$ 
then $x$ and $y$ are in an $\epsilon$-orbit segment.
\end{prop}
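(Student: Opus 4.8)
The plan is to prove both implications by unwinding the definitions and comparing the two notions of "closeness of trajectories": the one using a single reparameterization $h$ acting on $x$ (as in Definition \ref{BWexp}), and the one using the Fréchet distance $\dist_F$, which allows independent reparameterizations with rests on both $x$ and $y$. The key observation is that a pair $g=(h_1,h_2)\in\rep^2$ of reparameterizations with rests can be compared to a single reparameterization $h\in\rep^+$ by a change of variables, provided we are careful about the rests (intervals on which $h_1$ or $h_2$ is constant) and about the fact that $\rep$-maps need not be injective.

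First I would prove the "only if" direction. Assume $\phi$ is positive expansive and fix $\epsilon>0$; we must produce $\delta>0$. Apply Definition \ref{BWexp} with a slightly smaller constant to get $\delta_0>0$ such that $\dist(\phi_{h(t)}x,\phi_ty)<\delta_0$ for all $t\ge 0$ and some $h\in\rep^+$ forces $y\in\phi_{I\tau}x$; note $\phi_{I\tau}x$ is an orbit segment, and by uniform continuity of the flow on $I\tau$ we may further shrink $\delta_0$ so that $y\in\phi_{I\tau}x$ actually places $x,y$ in an $\epsilon$-orbit segment. Now set $\delta=\delta_0$ and suppose $\dist_F(x,y)<\delta$. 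Then there is $g=(h_1,h_2)\in\rep^2$ with $\sup_{t\ge 0}\dist(\phi_{h_1(t)}x,\phi_{h_2(t)}y)<\delta$. The task is to convert this into a single reparameterization. Since $h_2$ is non-decreasing, continuous and surjective, for each $s\ge 0$ choose $t$ with $h_2(t)=s$ (picking, say, the infimum of the preimage) and set $\bar h(s)=h_1(t)$; one checks $\bar h$ is non-decreasing and, after composing with a time-change that absorbs the rests of $h_2$, we may arrange $\bar h\in\rep^+$. Then $\dist(\phi_{\bar h(s)}x,\phi_s y)<\delta$ for all $s\ge 0$, so by the choice of $\delta_0$ we get $x,y$ in an $\epsilon$-orbit segment.

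For the "if" direction, assume the Fréchet-distance condition and derive Definition \ref{BWexp}. Given $\epsilon>0$, take $\delta>0$ from the hypothesis. If $\dist(\phi_{h(t)}x,\phi_ty)<\delta$ for all $t\ge 0$ with $h\in\rep^+$, then taking $g=(h,\mathrm{id})\in\rep^2$ (the identity and $h$ are both in $\rep$) gives $\dist_F(x,y)\le\sup_{t\ge0}\dist(\phi_{h(t)}x,\phi_t y)\le\delta<\delta$ — more precisely, choosing the constant in the hypothesis to be strict we get $\dist_F(x,y)<\delta$ — hence $x,y$ lie in an $\epsilon$-orbit segment, which in particular gives $y\in\phi_{I\tau}x$ once $\epsilon$ is chosen at most the length scale of $I\tau$-segments (again using uniform continuity to relate an $\epsilon$-orbit segment to membership in $\phi_{I\tau}x$; alternatively one shrinks $\tau$ or enlarges the reparameterization-free definition accordingly).

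The main obstacle is the construction of the single reparameterization $\bar h\in\rep^+$ from the pair $g\in\rep^2$: the maps $h_1,h_2$ may be constant on intervals (rests), so the naive inverse of $h_2$ is multivalued and the composite need not be a genuine increasing homeomorphism. The fix is to work first in the class $\rep$ of reparameterizations with rests and only afterward pass to $\rep^+$ — this is exactly why the paper introduces $\rep$ and records the equivalence. Handling the rests carefully (on an interval where $h_2$ is constant, $h_1$ must be controlled because the distance bound still holds there; on an interval where $h_1$ is constant, we let $\bar h$ rest) and verifying that the resulting map can be perturbed to lie in $\rep^+$ without losing the uniform distance bound is the technical heart of the argument; everything else is a direct translation between the two formulations together with an appeal to uniform continuity of $\phi$ on $I\tau$ to interchange "$y\in\phi_{I\tau}x$" with "$x,y$ in an $\epsilon$-orbit segment."
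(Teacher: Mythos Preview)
Your ``if'' direction is fine and matches the paper's one-line argument: since $\rep^+\subset\rep$, a single reparameterization $h\in\rep^+$ together with the identity already gives a pair in $\rep^2$, so the hypothesis on $\dist_F$ applies. The minor worry about strict versus non-strict inequality is handled by replacing $\delta$ by $\delta/2$.

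The gap is in the ``only if'' direction. Your proposed construction $\bar h(s)=h_1(\inf h_2^{-1}(s))$ is not continuous: if $h_2$ is constant on $[a,b]$ with value $s_0$ and $h_1(a)<h_1(b)$, then $\bar h$ jumps at $s_0$ from $h_1(a)$ to at least $h_1(b)$. Your remark that ``$h_1$ must be controlled because the distance bound still holds there'' is not enough: the bound only says the \emph{orbit point} $\phi_{h_1(t)}x$ stays in $B_\delta(\phi_{s_0}y)$, it does not keep $h_1$ itself from increasing. Likewise, the claim that one can perturb into $\rep^+$ ``without losing the uniform distance bound'' is not right as stated; some loss is unavoidable, and your argument sets $\delta=\delta_0$ with no slack for it.

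The paper isolates exactly this step as a separate lemma (Lemma~\ref{lemareparamx}) and proves it with a different construction. One first chooses $\delta'<\delta$ and $\gamma>0$ so that moving any point by time less than $\gamma$ changes distances by at most $(\delta-\delta')/2$. Then, rather than inverting $h_2$, one samples times $t_n,s_n$ where $h_1(t_n)=h_2(s_n)=n\gamma$, defines new reparameterizations by piecewise-linear interpolation through these nodes, and checks that the interpolants stay within $\gamma$ of the originals. The composite $h=h_1\circ h_2^{-1}$ of the interpolants is then a genuine element of $\rep^+$, and the triangle inequality together with the $\gamma$-buffer yields $\dist(\phi_{h(t)}x,\phi_t y)<\delta$. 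This is the concrete mechanism your sketch is missing.
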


\begin{proof}
 The converse follows because $id_\R\in\reparam\subset\reparamx$. 
The direct part is a consequence of the following Lemma.
\end{proof}

\begin{lemma}\label{lemareparamx}
For all $\delta>0$ there is $\delta'>0$ such that if 
$\dist_F(x,y)<\delta'$ then there is $h\in\reparam$ such that 
$\dist(\phi_{h(t)}x,\phi_ty)<\delta$ for all $t\geq 0$.
\end{lemma}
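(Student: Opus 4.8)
The plan is to convert a Fréchet-closeness bound, which by definition involves a \emph{pair} of reparameterizations with rests $g=(h_1,h_2)\in\rep^2$, into a bound involving a single genuine reparameterization $h\in\reparam$. Suppose $\dist_F(x,y)<\delta'$; then there is $g=(h_1,h_2)\in\rep^2$ with $\dist(\phi_{h_1(t)}x,\phi_{h_2(t)}y)<\delta'$ for all $t\ge 0$. Morally one wants to set $h=h_1\circ h_2^{-1}$, but $h_2$ need not be injective (it may rest), so $h_2^{-1}$ is only a multivalued/monotone pseudo-inverse; the job is to check that a suitable single-valued increasing homeomorphism built from this still works, at the cost of enlarging $\delta'$ to $\delta$.

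First I would reparametrize so that $h_2$ becomes the identity on the relevant range. Concretely, for each $s\ge 0$ let $\tau(s)=\inf\{t\ge 0: h_2(t)=s\}$ (well-defined since $h_2$ is continuous, nondecreasing, surjective with $h_2(0)=0$); then $h_2(\tau(s))=s$ and $\tau$ is nondecreasing. Define $\bar h(s)=h_1(\tau(s))$. On each rest interval of $h_2$ (a maximal interval where $h_2$ is constant equal to some $s$), $h_1$ may still increase, but $\tau(s)$ picks out the left endpoint of that interval, so $\bar h$ only "sees" the value of $h_1$ at that left endpoint; in particular $\bar h$ is nondecreasing and satisfies $\dist(\phi_{\bar h(s)}x,\phi_s y)<\delta'$ for all $s\ge 0$, because $(h_1(\tau(s)),h_2(\tau(s)))=(\bar h(s),s)$ is one of the points on the original orbit of $g$. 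So we have reduced to: $\bar h\in\rep$ (a reparameterization with rests, possibly not surjective onto $\R$ but defined on $[0,\infty)$ and nondecreasing) and $\dist(\phi_{\bar h(s)}x,\phi_s y)<\delta'$ for all $s\ge0$.

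Second I would remove the rests of $\bar h$, i.e.\ upgrade it to an honest strictly increasing $h\in\reparam^+$, paying a little in the constant. A rest of $\bar h$ on an interval $[a,b]$ means $\phi_{\bar h(a)}x$ stays within $\delta'$ of $\phi_s y$ while $s$ ranges over $[a,b]$; since the flow has no singular points and no small periods, by uniform continuity of $\phi$ on compact time intervals the length $b-a$ of any such rest is bounded by some function of $\delta'$ that tends to $0$ as $\delta'\to 0$ (otherwise, in a limit, a point would be $\delta'$-close to a whole orbit segment of definite length through it, which for $\delta'$ small forces a near-period — this is exactly the kind of argument used in Section~\ref{secthausdorff}). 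One then replaces $\bar h$ on each rest interval by a short linear ramp of slope, say, matching the neighbouring behaviour, obtaining a strictly increasing homeomorphism $h$ of $\R$ with $h(0)=0$; along the ramp the new point $\phi_{h(s)}x$ differs from the old $\phi_{\bar h(s)}x=\phi_{\bar h(a)}x$ by a flow displacement of time at most $\diam h([a,b])$, hence by less than a quantity $\eta(\delta')\to0$. Therefore $\dist(\phi_{h(s)}x,\phi_s y)<\delta'+\eta(\delta')$ for all $s\ge 0$, and choosing $\delta'$ small enough that $\delta'+\eta(\delta')<\delta$ finishes the proof.

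The main obstacle is the second step: controlling the length of the rests of $\bar h$ and the displacement incurred when smoothing them away, uniformly in $x,y$. This is where one genuinely uses that $\phi$ has no singularities and no arbitrarily small periods, so that being $\delta'$-close to an orbit-segment of length $\ell$ forces $\ell$ small; the bound $\ell\le\eta(\delta')$ with $\eta\to0$ is the quantitative heart of the lemma, and everything else is bookkeeping about monotone maps. The first step (passing from the pair $g$ to a single $\bar h$) is essentially formal once one is careful that $h_2^{-1}$ is to be read as the left-continuous pseudo-inverse.
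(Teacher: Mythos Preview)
Your first step is fine, but the function $\bar h=h_1\circ\tau$ you build is in general \emph{not} continuous: whenever $h_2$ rests on an interval $[a,b]$ at level $s_0$ while $h_1$ strictly increases there, the left pseudo-inverse $\tau$ jumps at $s_0$ from $a$ to $b$, and hence $\bar h$ jumps from $h_1(a)$ to $h_1(b)$. You never address these jump discontinuities; your second step treats only \emph{rests} (flat pieces) of $\bar h$, and even there the argument is off target: bounding the length $b-a$ of a rest does not help you replace the constant value $\bar h(a)=\bar h(b)$ by a strictly increasing ramp that still matches at both endpoints --- that is impossible. What actually needs bounding (via the no-small-periods argument you invoke) is the \emph{jump size} $h_1(b)-h_1(a)$, since on the rest of $h_2$ one has $\dist(\phi_{h_1(t)}x,\phi_{s_0}y)<\delta'$ for all $t\in[a,b]$, forcing the orbit segment $\phi_{[h_1(a),h_1(b)]}x$ to lie in a $\delta'$-ball. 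Even granting that, one must then interpolate across possibly infinitely many jumps and rests in a coordinated way, which your sketch does not do.

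The paper avoids forming $\bar h$ altogether. Given $\delta>0$ it picks $\delta'\in(0,\delta)$ and $\gamma>0$ with $\dist(p,\phi_t p)<(\delta-\delta')/2$ for all $p\in X$ and $|t|<\gamma$. If $(h_x,h_y)\in\rep^2$ witnesses $\dist_F(x,y)<\delta'$, choose increasing $t_n,s_n$ with $h_x(t_n)=h_y(s_n)=n\gamma$, define piecewise-linear homeomorphisms $h_1,h_2$ by $h_1(t_n)=h_2(s_n)=n\gamma$, so that $|h_1-h_x|,|h_2-h_y|<\gamma$ everywhere; then $h=h_1\circ h_2^{-1}\in\reparam$ and the triangle inequality gives $\dist(\phi_{h(t)}x,\phi_t y)<\delta$. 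Interpolating through a grid \emph{in the image} (multiples of $\gamma$) produces genuine homeomorphisms in one stroke and bypasses the rest/jump bookkeeping entirely.
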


\begin{proof}
Consider $\delta'\in (0,\delta)$ and $\gamma>0$ such that $\dist(x,\phi_tx)<(\delta-\delta')/2$ for all $x\in X$ 
and for all $t\in (-\gamma,\gamma)$. 
Take two increasing sequences $s_n$ and $t_n$ such that $h_x(t_n)=h_y(s_n)=n\gamma$ for all $n\geq 1$, starting with $s_0=t_0=0$.
Then define $h_1(t_n)=h_2(s_n)=n\gamma$ and extend piecewise linearly. 
In this way we have that $|h_1(t)-h_x(t)|,|h_2(t)-h_y(t)|<\gamma$ for all $t\geq 0$. 
Then by the triangular inequality it follows that $h=h_1\circ h_2^{-1}$ works.
\end{proof}

Consider the set $T_\epsilon(x,y)\subset \R^2$ of pairs of positive numbers $(t_x,t_y)$ such that 
there is $g\in\rep^2$ and $s>0$ such that $\dist(\phi_{g(t)}(x,y))\leq\epsilon$ 
for all $t\in[0,s]$ and $g(s)=(t_x,t_y)$.
In $\R^2$ we consider the norm $\|(a,b)\|=|a|+|b|$ (the properties of that specific norm will be used in the next Section). 
\begin{obs}
 If $T_\delta(x,y)$ is not bounded then $\pi_1 T_\delta(x,y)$ and $\pi_2 T_\delta(x,y)$ are not bounded, where $\pi_i(x_1,x_2)=x_i$, $i=1,2$, are the canonical projections of $\R^2$.
\end{obs}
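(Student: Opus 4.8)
The plan is to exploit the fact that $T_\delta(x,y)$ is ``closed downwards along witnessing paths'': if $(t_x,t_y)\in T_\delta(x,y)$ is realized by $g=(h_1,h_2)\in\rep^2$ and $s>0$, then for every $s'\in[0,s]$ the pair $g(s')=(h_1(s'),h_2(s'))$ lies in $T_\delta(x,y)$ too, just by restricting the witness to $[0,s']$. Since $h_1$ and $h_2$ are continuous and vanish at $0$, this gives two things at once: each $\pi_iT_\delta(x,y)$ is an interval with infimum $0$, and the continuous map $t\mapsto\|g(t)\|=h_1(t)+h_2(t)$ runs from $0$ up to $t_x+t_y$, so $T_\delta(x,y)$ meets every level $\{(a,b):a,b\ge0,\ a+b=v\}$ for $0\le v\le t_x+t_y$.

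From here I would argue as follows. If $T_\delta(x,y)$ is unbounded, then by the previous paragraph it meets every level $v\ge0$, hence at least one of $\sup\pi_1T_\delta(x,y)$ and $\sup\pi_2T_\delta(x,y)$ equals $+\infty$; say the first, so $\pi_1T_\delta(x,y)\supset(0,\infty)$. It remains to see $\pi_2T_\delta(x,y)$ is unbounded as well. Suppose, for a contradiction, that $\pi_2T_\delta(x,y)\subset[0,B]$, and choose $(a_n,b_n)\in T_\delta(x,y)$ with $a_n\to\infty$ and $b_n\le B$, witnessed by $g_n=(h_1^n,h_2^n)$ on $[0,s_n]$, so that $h_2^n$ stays in $[0,B]$. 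A pigeonhole on the compact interval $[0,B]$ — partition it into $\lceil B/\eta\rceil$ pieces of length $\le\eta$, whose $h_2^n$-preimages tile $[0,s_n]$ while the total increment of $h_1^n$ over them is $a_n$ — produces a subinterval of $[0,s_n]$ on which $h_2^n$ varies by at most $\eta$ while $h_1^n$ increases by at least $a_n/\lceil B/\eta\rceil$. By uniform continuity of the flow on compact time intervals this yields a point $q_n=\phi_{w_n}y$ with $w_n\in[0,B]$ and an orbit arc $\phi_{[c_n,c_n+L_n]}x$, with $L_n\to\infty$, contained in the closed $(\delta+o_\eta(1))$-ball around $q_n$. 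Passing to subsequential limits in $n$ and then letting $\eta\to0$ gives a point $q_*\in\phi_{[0,B]}y$ and a point $p_*$, lying in the closure of the forward orbit of $x$, with $\dist(\phi_tp_*,q_*)\le\delta$ for all $t\ge0$.

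The main obstacle is to turn this last confinement property into a contradiction, and it is the only step I do not expect to be routine. The intended line is: any $z\in\omega(p_*)$ has full orbit inside $\bar B_\delta(q_*)$, so $\dist(\phi_tz,\phi_sz)\le2\delta$ for all $t,s\in\R$; provided $\delta$ is small enough (which is the situation in the applications, where $\delta$ comes from positive expansiveness applied to a small $\epsilon$), Definition \ref{BWexp} used with $h=\mathrm{id}$ and a large time shift forces $z$ — and then, again by positive expansiveness, $p_*$ — onto a periodic orbit contained in $\bar B_\delta(q_*)$; but then, tracing back through the witnesses $g_n$ and using $a_n\to\infty$, the second coordinates $b_n$ would necessarily grow without bound, contradicting $b_n\le B$. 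Hence $\pi_2T_\delta(x,y)$ is unbounded; the case where $\pi_2T_\delta(x,y)$ was the one seen to be unbounded first is symmetric, exchanging the roles of $x$ and $y$ (equivalently of $h_1$ and $h_2$).
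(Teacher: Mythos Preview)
The paper records this as a bare remark with no proof; the author treats it as immediate from the definitions. Your write-up, by contrast, mounts a substantial argument whose decisive step invokes positive expansiveness and requires $\delta$ to lie below an expansive constant --- hypotheses that are \emph{not} part of the remark as stated (the remark is placed right after the definition of $T_\epsilon$, before any expansiveness is in force, and Proposition~\ref{thomas}, which uses it, is a statement about general flows without singular points). So as a proof of the remark as written, your proposal does not succeed: you have quietly strengthened the hypotheses.

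Even granting those extra hypotheses, the closing step is a real gap. You reach a point $p_*$ whose forward orbit is trapped in $\bar B_\delta(q_*)$ and argue that positive expansiveness forces $p_*$ onto a small periodic orbit; fine. But the sentence ``tracing back through the witnesses $g_n$ and using $a_n\to\infty$, the second coordinates $b_n$ would necessarily grow without bound'' is unsupported. Nothing you have established links the periodicity of $p_*$ (a subsequential limit of points $\phi_{c_n}x$) to the values $h_2^n(s_n)=b_n$; the witnesses $g_n$ were \emph{chosen} with $h_2^n$ confined to $[0,B]$, and no mechanism is given that breaks that confinement.

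Two side comments. First, the elementary portion of your argument is correct and is actually all that matters for the applications: since every point of $T_\delta(x,y)$ lies on a continuous monotone path $t\mapsto g(t)$ issuing from the origin, unboundedness of $T_\delta$ forces at least one projection to be unbounded, and the proof of Proposition~\ref{thomas} is symmetric under swapping $x$ and $y$, so knowing one unbounded projection suffices there. Second, I do not see a one-line proof of the full remark (both projections unbounded, arbitrary $\delta$, arbitrary singularity-free flow); you may have located a soft spot in the exposition rather than overlooked something obvious.
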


\begin{lemma}\label{lemareparamx'}
For all $\delta'>0$ there is $\delta>0$ such that if $\dist(\phi_{g(t)}(x,y))<\delta$ for all $t\in[0,T]$ 
and some $g\in\rep^2$
then there is $h\in\reparam$ such that $\dist(\phi_{h(t)}x,\phi_ty)<\delta'$ for all $t\in[0,h(T)]$.
\end{lemma}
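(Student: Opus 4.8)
The plan is to imitate the discretization already used in the proof of Lemma \ref{lemareparamx}: I replace the two reparameterizations with rests $h_1,h_2$ (the components of $g$) by genuine reparameterizations $\bar h_1,\bar h_2\in\reparam$ that stay within one ``$\gamma$-step'' of flow time of them, and then set $h=\bar h_1\circ\bar h_2^{-1}$. The whole point — and the place where the previous attempt went wrong — is that $h$ must land in $\reparam$, i.e. be a strictly increasing homeomorphism with no rests; this is achieved precisely because $\bar h_1,\bar h_2$ are built to be strictly increasing, so their composition is again a strictly increasing homeomorphism.

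First I would fix the scale. Given $\delta'>0$, set $\delta=\delta'/2$ and, using the uniform continuity of $\phi$ on compact time intervals, choose $\gamma>0$ so that $\dist(z,\phi_\tau z)<(\delta'-\delta)/2$ for every $z\in X$ and every $|\tau|\le\gamma$. This $\delta$ is the one claimed by the lemma, and $(\delta'-\delta)/2$ is the slack that the two clocks $\bar h_i$ are allowed to accumulate.

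Next comes the surgery producing the $\bar h_i$. Since each $h_i\in\rep$ is continuous, non-decreasing and surjective, for every integer $n\ge 1$ with $n\gamma\le h_i(T)$ the level set $h_i^{-1}(n\gamma)$ is a nonempty closed interval; put $t^i_n=\min h_i^{-1}(n\gamma)$, $t^i_0=0$, and add a final node $(T,h_i(T))$ (with the obvious adjustment when $h_i(T)\in\gamma\Z$ or when $h_i$ rests next to $T$). Let $\bar h_i$ be the piecewise linear map through these nodes. Because $h_i(t^i_n)=n\gamma$ is strictly increasing in $n$, the nodes $t^i_n$ are strictly increasing, so $\bar h_i$ is a \emph{strictly} increasing homeomorphism with $\bar h_i(0)=0$ and no rests; and on each $[t^i_n,t^i_{n+1}]$ both $h_i$ and $\bar h_i$ take values in $[n\gamma,(n+1)\gamma]$, whence $|\bar h_i(t)-h_i(t)|<\gamma$ for all $t\in[0,T]$. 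This last monotonicity/endpoint check is exactly the step I expect to be the main (though routine) obstacle: one must make sure the node construction leaves no residual flat piece even when $h_i$ has a rest adjacent to $T$, so that $h$ is genuinely in $\reparam$ and not merely in $\rep$.

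Finally, since $\bar h_2\colon[0,T]\to[0,h_2(T)]$ is an increasing homeomorphism, $h:=\bar h_1\circ\bar h_2^{-1}$ is an increasing homeomorphism of $[0,h_2(T)]$ onto $[0,h_1(T)]$ with $h(0)=0$, which I extend arbitrarily to an element of $\reparam$. For $s\in[0,h_2(T)]$ put $\tau=\bar h_2^{-1}(s)\in[0,T]$, so that $s=\bar h_2(\tau)$ and $h(s)=\bar h_1(\tau)$, and estimate by the triangle inequality and the choice of $\gamma$:
\begin{align*}
\dist(\phi_{h(s)}x,\phi_s y)
&\le \dist(\phi_{\bar h_1(\tau)}x,\phi_{h_1(\tau)}x)+\dist(\phi_{h_1(\tau)}x,\phi_{h_2(\tau)}y)+\dist(\phi_{h_2(\tau)}y,\phi_{\bar h_2(\tau)}y)\\
&< \tfrac{\delta'-\delta}{2}+\delta+\tfrac{\delta'-\delta}{2}=\delta',
\end{align*}
the middle term being $<\delta$ by hypothesis since $\tau\in[0,T]$. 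Thus $\dist(\phi_{h(t)}x,\phi_t y)<\delta'$ holds for all $t$ in the full range of the second clock, i.e. $t\in[0,h_2(T)]$, which is the upper endpoint appearing in the statement, completing the argument.
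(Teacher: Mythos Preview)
Your proof is correct and follows exactly the approach the paper intends: its own proof of this lemma is the single sentence ``Use the same technique of Lemma~\ref{lemareparamx},'' and you have carried out precisely that technique---sample each $h_i$ at the levels $n\gamma$, replace it by the piecewise-linear $\bar h_i$ through those nodes (hence strictly increasing, so genuinely in $\reparam$), verify $|\bar h_i-h_i|<\gamma$ on $[0,T]$, and set $h=\bar h_1\circ\bar h_2^{-1}$. Your observation that the upper endpoint in the conclusion should be read as $h_2(T)$ (the range of the second clock) rather than literally $h(T)$ is also correct, as the application in Proposition~\ref{thomas} confirms.
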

\begin{proof}
 Use the same technique of Lemma \ref{lemareparamx}.
\end{proof}

If $\dist_F(x,y)<\epsilon$ then $T_\epsilon(x,y)$ is not bounded, as can be seen from the definitions. 
The following Proposition is a kind of converse. Its proof is based on the proof of Lemma 9 in \cite{Th}.

\begin{prop}\label{thomas}
For all $\epsilon>0$ there is $\delta>0$ such that if $T_\delta(x,y)$ is not bounded then $\dist_F(x,y)<\epsilon$. 
\end{prop}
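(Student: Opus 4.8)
The plan is to find a single pair $g=(h_1,h_2)\in\rep^2$ (so both $h_i$ sweep out all of $[0,\infty)$) along which $\sup_{t\ge 0}\dist(\phi_{g(t)}(x,y))$ stays below $\epsilon$; equivalently, to produce an infinite non-decreasing ``staircase'' $\gamma\colon[0,\infty)\to[0,\infty)^2$ with $\gamma(0)=(0,0)$, both coordinates tending to $\infty$, and $\dist(\phi_{\gamma_1(t)}x,\phi_{\gamma_2(t)}y)\le C\delta$ for all $t$, with $C$ an absolute constant. Choosing $\delta<\epsilon/C$ at the start then yields $\dist_F(x,y)<\epsilon$. Throughout, $R_\delta=\{(s,t)\in[0,\infty)^2:\dist(\phi_sx,\phi_ty)\le\delta\}$ is a closed subset of the plane, and $T_\delta(x,y)$ coincides up to closure with the set $A_\delta$ of endpoints of non-decreasing staircases from $(0,0)$ lying inside $R_\delta$.

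The engine is a compactness argument. Two elementary observations are used repeatedly: staircases inside $R_\delta$ of bounded $\ell^1$-length form a compact family (by Arzel\`a--Ascoli after arc-length reparametrization, so in particular $A_\delta$ is closed), and along any non-decreasing staircase the function $\mu=\min(\cdot,\cdot)$ of the two coordinates is itself non-decreasing. Now distinguish two cases. In Case (A) there is a non-decreasing staircase $\gamma\colon[0,\infty)\to R_\delta$ from $(0,0)$ with $\gamma_1(t)\to\infty$ and $\gamma_2(t)\to\infty$; reparametrizing its two components into elements of $\rep$ produces a $g$ with $\sup_{t\ge0}\dist(\phi_{g(t)}(x,y))\le\delta<\epsilon$, and we are done. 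In Case (B) no such staircase exists even though $T_\delta(x,y)$ is unbounded. Taking staircases inside $R_\delta$ from $(0,0)$ of $\ell^1$-length tending to $\infty$ and passing to a limit (Helly's selection principle for monotone maps is enough here, no Lipschitz bound being available) gives a non-decreasing $\gamma_*\colon[0,\infty)\to R_\delta$ from $(0,0)$; by the case hypothesis one of its coordinates is bounded, say the second, $\gamma_{*,2}(t)\uparrow c<\infty$. Since this staircase witnesses $\dist(\phi_ax,\phi_by)\le\delta$ for pairs $(a,b)$ with $a$ arbitrarily large and $b$ as close to $c$ as we like, letting $a\to\infty$ shows that the forward orbit of $x$ is eventually confined to $\overline{B}_{2\delta}(\phi_cy)$; in particular $\omega(x)\subset\overline{B}_{2\delta}(\phi_cy)$.

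Still in Case (B), invoke the Remark: $\pi_2\,T_\delta(x,y)$ is also unbounded, so the very same analysis performed in the other coordinate yields either a staircase as in Case (A) (and we are done) or a confinement of the forward orbit of $y$ to $\overline{B}_{2\delta}(\phi_{c'}x)$, with $\omega(y)\subset\overline{B}_{2\delta}(\phi_{c'}x)$. The confining staircase for $x$ keeps the pair $\delta$-close along all of the orbit segment $\phi_{[0,c]}y$, and symmetrically for $y$ along $\phi_{[0,c']}x$; running a bounded chain of triangle inequalities through these two confinements --- using that a forward orbit eventually confined near a point of its own earlier trajectory is almost recurrent, hence has $\omega$-limit set of diameter $O(\delta)$ --- shows that $\phi_cy$ and $\phi_{c'}x$ lie within a fixed multiple of $\delta$ of each other, so the forward orbits of $x$ and of $y$ are both eventually confined to one ball of radius $C\delta$. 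Prepending the finite initial portion of the confining staircase for $x$ and then letting both clocks run freely inside this ball produces the sought bi-infinite staircase with all distances $\le C\delta$, and since $\delta<\epsilon/C$ this gives $\dist_F(x,y)<\epsilon$.

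The hard part is Case (B): turning finite staircases that are arbitrarily long but increasingly lopsided into a genuinely bi-infinite one. Combinatorics of monotone paths alone will not do it --- the region $R_\delta$ is unconstrained --- so one has to bring in the flow's dynamics, namely the orbit confinement forced by a bounded coordinate and the resulting rigidity of the relevant $\omega$-limit sets; this is also the reason $\delta$ must be chosen small relative to $\epsilon$, so that the finitely many triangle inequalities invoked do not erode the bound past $\epsilon$.
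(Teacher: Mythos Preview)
Your Case~(A) is fine, but Case~(B) has a genuine gap. The assertion that $\phi_cy$ and $\phi_{c'}x$ lie within a fixed multiple of $\delta$ of each other is not justified: the two confinements only tell you that $\omega(x)\subset\overline{B}_{2\delta}(\phi_cy)$ and $\omega(y)\subset\overline{B}_{2\delta}(\phi_{c'}x)$, but neither $\phi_cy$ nor $\phi_{c'}x$ need belong to the relevant $\omega$-limit set, the values $c,c'$ are not controlled in terms of $\delta$, and no ``bounded chain of triangle inequalities'' bridging them is written down. Even granting that claim, your final construction fails: after following $\gamma_*$ until $x$'s clock is large (so $y$'s clock is near $c$), you must then advance $y$'s clock from $c$ up to the time at which $y$'s orbit enters the small ball, and during that stretch $\phi_by$ can wander arbitrarily far from the ball; the distance along your proposed staircase is therefore not bounded by $C\delta$.

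The repair is simpler than what you attempt: Case~(B) is impossible once $\delta$ is small. If the forward $\phi$-orbit of $x$ is eventually contained in $\overline{B}_{2\delta}(\phi_cy)$, then $\omega(x)$ is a nonempty compact $\phi$-invariant set of diameter at most $4\delta$. Since the flow has no singular points (hence no arbitrarily small periods), compactness gives $\tau_0>0$ and $\eta>0$ with $\dist(p,\phi_{\tau_0}p)\ge\eta$ for every $p\in X$; choosing $4\delta<\eta$ yields a contradiction, and only Case~(A) survives. This is precisely the content of the paper's condition~(\ref{asterisco2}), but the paper deploys it differently: rather than a limit-of-staircases dichotomy, it uses (\ref{asterisco2}) to show that any two reparameterizations in $\reparam$ keeping the pair $\delta'$-close can never differ by as much as $\gamma$ (a connectedness argument from $h(0)=0$), and then glues the finite reparameterizations $h_x^n$ along the grid $h(w_n)=n\gamma$ into a single $h\in\reparamx$. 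So your compactness route can be made to work, but the paper's constructive gluing bypasses Case~(B) entirely.
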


\begin{proof}
For $\epsilon>0$ given consider $\gamma>0$ such that 
\begin{equation}\label{asterisco1}
\hbox{if $\dist(x,y)<\epsilon/2$ and $|t|<\gamma$ then 
$\dist(\phi_tx,y)<\epsilon$.} 
\end{equation}
Take $\delta'\in(0,\epsilon/2)$ such that 
\begin{equation}\label{asterisco2}
\hbox{if $\dist(x,y)<\delta'$ then $\dist(\phi_{\pm\gamma} x,y)>\delta'$.} 
\end{equation}
Finally, pick $\delta>0$ from Lemma \ref{lemareparamx'} associated to $\delta'$. We will show that this value of $\delta$ works.
Suppose that for some $x,y\in X$ we have that $T_\delta(x,y)$ is not bounded. 
So, for all $n\geq 1$ there are $h'_x,h'_y\in\reparamx$ and $T>0$ 
such that $$\dist(\phi_{h'_x(t)}x,\phi_{h'_y(t)}y)<\delta$$ for all $t\in[0,T]$ and $h'_y(T)=n$.
Then by Lemma \ref{lemareparamx'} there is $h^n_x\in\rep$ such that 
$$\dist(\phi_{h^n_x(t)}x,\phi_ty)<\delta'$$
for all $t\in[0,n]$.
Eventually taking a subsequence we can suppose that there is an increasing sequence $w_n\to\infty$ such that 
$h_x^n(w_n)=n\gamma$ and 
$$\dist(\phi_{h^n_x(t)}x,\phi_ty)<\delta'$$
for all $t\in[0,w_n]$. We will define $h\in\reparamx$ such that $$\dist(\phi_{h(t)}x,\phi_ty)<\epsilon$$ 
for all $t\geq 0$. 
Define $h(w_n)=h_x^{n}(w_n)=n\gamma$ for all $n\geq 0$. For $t\in[0,w_1]$ define $h(t)=h_x^1(t)$. 
Now consider $t\in (w_{n-1},w_n)$. 
To define $h(t)$ we consider two cases.
\begin{enumerate}
 \item If $h_x^{n-1}(w_{n-1})\leq h_x^n(w_{n-1})$ then $h(w_{n-1})=h_x^{n-1}(w_{n-1})$ and extend linearly for $t\in (w_{n-1},w_n)$.
\item If $h_x^{n-1}(w_{n-1})> h_x^n(w_{n-1})$ consider $z\in (w_{n-1},w_n)$ such that $h_x^n(z)=(n-1)\gamma$. 
Define $h(t)=(n-1)\gamma$ for all 
$t\in[w_{n-1},z]$ and extend linearly for $t\in [z,w_n]$.
\end{enumerate}
By condition (\ref{asterisco2}) we have that $|h(t)-h_x^n(t)|\leq \gamma$ for all $t\in [w_{n-1},w_n]$ and $n\geq 1$. 
Then, since $\dist(\phi_{h_x^n(t)}x,\phi_ty)<\delta'<\epsilon/2$, we have by condition (\ref{asterisco1}) that
$$\dist(\phi_{h(t)}x,\phi_ty)<\epsilon$$
for all $t\geq 0$ and the proof ends.
\end{proof}

Here is another characterization of expansiveness that will be useful.

\begin{prop}\label{posexpequiv2}
 A flow $\phi$ is positive expansive if and only if for all $\epsilon>0$ there is $\delta>0$ such that if $T_\delta(x,y)$ is not bounded then $x$ and $y$ 
are in a $\epsilon$-orbit segment.
\end{prop}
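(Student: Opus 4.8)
The plan is to obtain this proposition by assembling two results already established: Proposition \ref{posexpequiv1}, which characterizes positive expansiveness via $\dist_F$, and Proposition \ref{thomas}, together with the elementary observation recorded just before Proposition \ref{thomas} that $\dist_F(x,y)<\epsilon$ implies that $T_\epsilon(x,y)$ is not bounded. Once these are in hand, both implications reduce to matching up the constants.

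For the forward implication, assume $\phi$ is positive expansive and fix $\epsilon>0$. By Proposition \ref{posexpequiv1} there is $\delta_1>0$ such that $\dist_F(x,y)<\delta_1$ forces $x$ and $y$ to lie in an $\epsilon$-orbit segment. Now apply Proposition \ref{thomas} with $\delta_1$ playing the role of its ``$\epsilon$'': this gives $\delta>0$ such that if $T_\delta(x,y)$ is not bounded then $\dist_F(x,y)<\delta_1$. Chaining these two facts, whenever $T_\delta(x,y)$ is not bounded we conclude that $x$ and $y$ lie in an $\epsilon$-orbit segment, which is what we want.

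For the converse, assume the $T_\delta$-condition of the statement and fix $\epsilon>0$; let $\delta>0$ be the constant it provides. If $\dist_F(x,y)<\delta$, then by the observation above $T_\delta(x,y)$ is not bounded, hence $x$ and $y$ lie in an $\epsilon$-orbit segment. Since $\epsilon>0$ was arbitrary, Proposition \ref{posexpequiv1} yields that $\phi$ is positive expansive, completing the proof.

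I do not expect a genuine obstacle here: the substantive work is contained in Proposition \ref{thomas}, which is already proved, and the rest is quantifier bookkeeping. The only point that needs care is to keep straight which parameter plays the role of ``$\epsilon$'' and which the role of ``$\delta$'' when invoking Proposition \ref{thomas}, and to make sure the phrase ``$\epsilon$-orbit segment'' is used exactly as in Proposition \ref{posexpequiv1} so that the two characterizations glue together cleanly.
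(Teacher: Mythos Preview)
Your proposal is correct and follows essentially the same approach as the paper: for the forward direction you chain Proposition~\ref{posexpequiv1} with Proposition~\ref{thomas} (with $\delta_1$ playing the role of the paper's $\delta'$), and for the converse you use the observation that $\dist_F(x,y)<\delta$ implies $T_\delta(x,y)$ is unbounded, then invoke Proposition~\ref{posexpequiv1}. The only difference is that you spell out the final appeal to Proposition~\ref{posexpequiv1} in the converse more explicitly than the paper does.
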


\begin{proof}
 Suppose that $\phi$ is positive expansive. Consider $\epsilon>0$ given. 
By Proposition \ref{posexpequiv1} there is $\delta'$ such that if $\dist_F(x,y)<\delta'$ then they are in a $\epsilon$-orbit segment. 
Now take from Proposition \ref{thomas} a positive $\delta$ such that if $T_\delta(x,y)$ is not bounded then 
$\dist_F(x,y)<\delta'$.
This finishes the direct part. 

The converse follows because if $\dist_F(x,y)<\delta$ then $T_\delta(x,y)$ is not bounded.
\end{proof}

\section{Stability}\label{sectstability}

In that Section we assume that the flow has not singular points. 
We introduce the concept of Lyapunov stability allowing reparameterizations of the trajectories. The stability properties of positive expansive flows are stated.
We assume that the metric of the space is $\tdist$, defined in Section \ref{secthausdorff}, but we will denote it simply as $\dist$. 

We start defining Lyapunov stability according to the Fréchet distance as was done in \cites{Ma,Pa}.

\begin{df}\label{dfstable}
We say that $x$ is \emph{stable} if for every $\epsilon>0$ there is $\delta>0$ 
such that if $\dist(x,y)<\delta$ then $\dist_F(x,y)<\epsilon$,
i.e. there is 
a pair of reparameterizations with rests $g\in\rep^2$ such that $\dist(\phi_{g(t)}(x,y))<\epsilon$ for all $t\geq 0$.
\end{df}

\begin{obs}\label{stabilityequiv}
 By Lemma \ref{lemareparamx} we have that $x$ is stable if and only if for every $\epsilon>0$ there is $\delta>0$ 
such that if $\dist(x,y)<\delta$ then 
there is a 
reparameterization $h\in\rep^+$ such that $\dist(\phi_tx,\phi_{h(t)}y)<\epsilon$ for all $t\geq 0$.
\end{obs}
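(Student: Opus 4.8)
The plan is to prove the equivalence in Remark \ref{stabilityequiv} directly from Definition \ref{dfstable} together with Lemma \ref{lemareparamx}. Recall that $x$ is stable means: for every $\epsilon>0$ there is $\delta>0$ such that $\dist(x,y)<\delta$ implies $\dist_F(x,y)<\epsilon$. So the task is to translate the condition ``$\dist_F(x,y)<\epsilon$'' into the condition ``there is $h\in\rep^+$ with $\dist(\phi_tx,\phi_{h(t)}y)<\epsilon$ for all $t\geq 0$'', and back.

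First I would prove the easy direction. If for every $\epsilon>0$ there is $\delta>0$ such that $\dist(x,y)<\delta$ yields some $h\in\rep^+$ with $\dist(\phi_tx,\phi_{h(t)}y)<\epsilon$ for all $t\geq 0$, then taking $g=(h^{-1}\!,\, \mathrm{id}_\R)\in\rep^2$ (noting $h^{-1}\in\rep^+\subset\rep$) we get $\dist(\phi_{g(t)}(x,y))=\dist(\phi_{h^{-1}(t)}x,\phi_ty)$, and after the substitution $t\mapsto h(t)$ the supremum over $t\geq 0$ of this quantity equals $\sup_{t\geq 0}\dist(\phi_tx,\phi_{h(t)}y)<\epsilon$. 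Hence $\dist_F(x,y)\leq\epsilon$, which after a harmless $\epsilon/2$ adjustment gives $\dist_F(x,y)<\epsilon$, i.e. $x$ is stable in the sense of Definition \ref{dfstable}.

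For the converse, suppose $x$ is stable. Fix $\epsilon>0$. Apply Lemma \ref{lemareparamx} to $\delta:=\epsilon$ to obtain $\delta'>0$ such that $\dist_F(x,y)<\delta'$ implies the existence of $h\in\rep^+$ (the lemma is stated with $\reparam=\rep^+$) with $\dist(\phi_{h(t)}x,\phi_ty)<\epsilon$ for all $t\geq 0$. Now by stability there is $\delta>0$ such that $\dist(x,y)<\delta$ implies $\dist_F(x,y)<\delta'$, hence such an $h$ exists. Finally, replacing $h$ by $h^{-1}\in\rep^+$ and reindexing $t\mapsto h^{-1}(t)$ turns $\dist(\phi_{h(t)}x,\phi_ty)<\epsilon$ into $\dist(\phi_tx,\phi_{h^{-1}(t)}y)<\epsilon$ for all $t\geq 0$, which is exactly the asserted form. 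This closes the equivalence.

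The only genuinely delicate point is bookkeeping the direction of the reparameterization and which of $x,y$ carries it: Lemma \ref{lemareparamx} produces an $h$ acting on $x$, while Remark \ref{stabilityequiv} wants $h$ acting on $y$, so one must invoke the fact that $\rep^+$ is a group under composition and that $h\mapsto h^{-1}$ preserves it, together with the reparameterization-invariance of the supremum defining $\dist_F$. None of this requires compactness or expansiveness beyond what is already packaged in Lemma \ref{lemareparamx}; the argument is purely formal once that lemma is in hand.
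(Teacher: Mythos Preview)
Your proof is correct and follows exactly the approach the paper intends: the Remark is stated as an immediate consequence of Lemma \ref{lemareparamx}, and you have simply written out the details the paper leaves implicit (one direction by the inclusion $\rep^+\subset\rep$, the other by applying Lemma \ref{lemareparamx} and then inverting the reparameterization). A minor simplification in the easy direction: rather than taking $g=(h^{-1},\mathrm{id}_\R)$ and substituting, you can directly take $g=(\mathrm{id}_\R,h)\in\rep^2$, which gives $\dist(\phi_{g(t)}(x,y))=\dist(\phi_tx,\phi_{h(t)}y)<\epsilon$ without any change of variable.
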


\begin{df} 
We say that $(T_x, T_y)$ in the closure of $T_\epsilon(x,y)$ is a \emph{maximal pair of times} 
for $(\epsilon,x,y)$ if for all $(t_x,t_y)\in T_\epsilon(x,y)$ 
we have that $\|(T_x,T_y)\|\geq \|(t_x,t_y)\|$ for the sum norm in $\R^2$. 
\end{df}

In the following result we use the properties of $\tdist$. 
For this we will consider the positive number $\tilde\tau$ given in Lemma \ref{lemax} and the interval $I{\tilde\tau}=[-\tilde\tau,\tilde\tau]$.
As usual, we define the distance between a point $a\in X$ and a set $A\subset X$ as 
$\dist(a,A)=\inf\{\dist(a,x):x\in A\}$.

\begin{prop}\label{propmaxpair}
For all $\epsilon>0$ there is $\sigma>0$ such that if $(T_x,T_y)$ is a maximal pair of times for $(\epsilon,x,y)$ 
then 

\begin{center}
$\dist(\phi_{T_x}x,\phi_{I{\tilde\tau}}(\phi_{T_y}y))>\sigma$ and 
$\dist(\phi_{T_y}y,\phi_{I{\tilde\tau}}(\phi_{T_x}x))>\sigma$.
\end{center}
\end{prop}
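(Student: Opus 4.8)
The plan is to argue by contradiction and compactness. Suppose no such $\sigma>0$ works; then there are $\epsilon>0$, sequences $x_n,y_n\in X$, and maximal pairs of times $(T_x^n,T_y^n)$ for $(\epsilon,x_n,y_n)$ such that (say, after passing to a subsequence and swapping the roles of $x$ and $y$ if needed) $\dist(\phi_{T_x^n}x_n,\phi_{I\tilde\tau}(\phi_{T_y^n}y_n))\to 0$. Set $a_n=\phi_{T_x^n}x_n$ and $b_n=\phi_{T_y^n}y_n$. Since $(T_x^n,T_y^n)\in\clos T_\epsilon(x_n,y_n)$, there is $g_n=(h_1^n,h_2^n)\in\rep^2$ and $s_n>0$ with $g_n(s_n)=(T_x^n,T_y^n)$ (or a limit thereof) and $\dist(\phi_{g_n(t)}(x_n,y_n))\le\epsilon$ for $t\in[0,s_n]$; in particular the two trajectory segments joining $(x_n,y_n)$ to $(a_n,b_n)$ stay $\epsilon$-close under a reparameterization. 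Passing to a subsequence, assume $a_n\to a$ and $b_n\to b$; by hypothesis $\dist(a,\phi_{I\tilde\tau}(b))=0$, so $a=\phi_r b$ for some $r\in I\tilde\tau$, i.e. $a$ and $b$ lie on the same orbit segment of length at most $\tilde\tau$.

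Next I would use the maximality of $(T_x^n,T_y^n)$ to derive a contradiction by \emph{extending} the pair of times. The key geometric input is Proposition~\ref{propmonotonia} (and behind it Proposition~\ref{propmonotonialocal} and Lemma~\ref{lemax}), which controls how $\tdist(\phi_s y,x)$ behaves monotonically when one point is close to a backward/forward translate of the other inside a flow box. Because $b=\phi_{-r}a$ with $|r|\le\tilde\tau$ (or the symmetric situation), for large $n$ the points $a_n$ and $b_n$ sit in a flow box in the configuration to which Proposition~\ref{propmonotonia} applies. This lets me take the reparameterization with rests that realizes $(T_x^n,T_y^n)$ and append a short extra piece: keep the point that is ``ahead'' at rest while letting the other one flow forward a small amount (using that $\rep$ allows rests — this is exactly the mechanism highlighted in the introduction), and Proposition~\ref{propmonotonia} guarantees the distance does not exceed $\epsilon$ along this extra piece. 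That produces a pair $(t_x,t_y)\in T_\epsilon(x_n,y_n)$ with $\|(t_x,t_y)\|>\|(T_x^n,T_y^n)\|$, contradicting maximality.

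The main obstacle, and where I would spend the most care, is making the ``flow box extension'' step rigorous: I must check that the limiting configuration $a=\phi_r b$ really falls within the hypotheses of Proposition~\ref{propmonotonia} — in particular that the relevant time $|r|$ (or $\tilde\tau-|r|$, depending on which of $x,y$ is ahead) lies in an interval $[t_2,\tilde\tau]$ for which the proposition gives a uniform $\delta$ and $t_1$ — and that for $n$ large $\tdist(\phi_{t}a_n,b_n)<\delta$ (or the symmetric inequality) holds for the appropriate small $t$, so that one can actually flow one point forward by up to $t_1$ while keeping the other at rest without the distance ever exceeding $\epsilon$. One has to handle the boundary case $r=0$ (i.e. $a=b$) separately, but there the contradiction is immediate since then for large $n$ one can extend both times using that the flow has no arbitrarily small periods. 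I would also need to be slightly careful that $(T_x^n,T_y^n)$, being only in the \emph{closure} of $T_\epsilon(x_n,y_n)$, is still approximable by genuine elements to which the extension can be applied; a short limiting argument handles this. Once these verifications are in place, the contradiction with maximality closes the proof.
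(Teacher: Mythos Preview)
Your core idea --- contradict maximality by extending the reparameterization pair via Proposition~\ref{propmonotonia}, keeping one point at rest --- is exactly the paper's. However, the paper carries it out \emph{directly}, without sequences or limits: given $\epsilon$, it first picks $t_2>0$ with $\phi_{[-t_2,t_2]}z\subset B_\epsilon(z)$ for all $z$, then takes $\delta,t_1$ from Proposition~\ref{propmonotonia} for this $t_2$, and finally chooses $\sigma\in(0,\delta)$ so that $y\notin B_\epsilon(x)$ forces $\dist(\phi_{[-t_2,t_2]}x,y)>\sigma$. The contradiction is then run for a single $(x,y,T_x,T_y)$.

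The observation you are missing, and which removes most of the difficulties you flag, is that a maximal pair automatically satisfies $\dist(\phi_{T_x}x,\phi_{T_y}y)=\epsilon$: if the distance were strictly smaller, one could nudge both coordinates forward and beat maximality. With this in hand, your ``boundary case $r=0$'' cannot occur, and more importantly the parameter $t_0$ at which $\phi_{t_0}\phi_{T_x}x$ is $\sigma$-close to $\phi_{T_y}y$ must satisfy $|t_0|\ge t_2$ (by the choice of $\sigma$), landing you squarely in the hypothesis of Proposition~\ref{propmonotonia} with no need to fuss about a lower bound on $|r|$. The approximation of $(T_x,T_y)$ by a genuine element $(T'_x,T'_y)$ of $T_\epsilon(x,y)$ with $\|(T'_x,T'_y)-(T_x,T_y)\|<t_1$ is exactly what the paper does to handle the closure issue you mention. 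So your compactness route would work, but it manufactures complications that the direct choice of constants avoids.
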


\begin{proof} 
Given $\epsilon>0$ consider $t_2>0$ such that $\phi_{[-t_2,t_2]}x\subset B_\epsilon (x)$ for all $x\in X$.
For this value of $t_2$ take $\delta>0$ and $t_1>0$ from Proposition \ref{propmonotonia}. 
Consider $\sigma\in(0,\delta)$ such that 
\begin{equation}\label{ecu1}
\hbox{if } y\notin B_\epsilon(x) \hbox{ then } \dist(\phi_{[-t_2,t_2]}x,y)>\sigma. 
\end{equation}
Notice that $\dist(\phi_{T_x}x,\phi_{T_y}y)=\epsilon$. 
By contradiction assume that 
$$\dist(\phi_{T_y}y,\phi_{I{\tilde\tau}}(\phi_{T_x}x))\leq\sigma,$$ 
being the other case symetric. 
By condition (\ref{ecu1}) there is $t_0\in [-\tilde\tau,-t_2]\cup [t_2,\tilde\tau]$ such that $$\dist(\phi_{T_y}y,\phi_{t_0}\phi_{T_x}x)\leq\sigma.$$
Suppose that $t_0\in [t_2,\tilde\tau]$ (the other case is similar).
Now take $g\in\rep^2$, $(T'_x,T'_y)\in\R^2$ and $s>0$ such that 
$
 \dist(\phi_{g(t)}(x,y))<\epsilon
$
for all $t\in[0,s]$, 
\begin{equation}\label{ecu2}
\|(T'_x,T'_y)-(T_x,T_y)\|< t_1 
\end{equation}
and $g(s)=(T'_x,T'_y)$. 
We define $\hat g\in\rep^2$ as 
\[
\hat g(t)=
\left\{
\begin{array}{ll}
 g(t) & \hbox{for all } t\leq s,\\
 g(s)+(t-s,0) & \hbox{if } t\in[s,s+t_1],\\
 g(s)+(t-s,t-s-t_1) & \hbox{if } t\geq s+t_1.  
\end{array}
\right.
\]
So, for $t\in[s,s+t_1]$ we have, by Proposition \ref{propmonotonia}, 
that $\dist(\phi_{\hat g(t)}(x,y))\leq\dist(\phi_{\hat g(s)}(x,y))<\epsilon$. 
Then $g(s+t_1)=(T'_x+t_1,T'_y)\in T_\epsilon(x,y)$ and by inequality (\ref{ecu2}) we have that $\|g(s+t_1\|>\|(T_x,T_y)\|$ contradicting 
the maximality of $(T_x,T_y)$. 
\end{proof}

Given $\epsilon>0$ and $x,y\in X$ we consider the following set of pairs of reparameterizations with rests
\[
 \rep^2_\epsilon(x,y)=\{g\in\rep^2:\dist(\phi^{-1}_{g(t)}(x,y))<\epsilon \hbox{ for all } t\geq 0\}.
\]
The following result says that if two points are close enough then $\rep^2_\epsilon(x,y)$ is not empty if 
$\phi$ is positive expansive without singular points.
Notice that positive expansiveness do not depend on the metric (defining the same topology). 
Therefore we will assume that $\dist$ has the properties of the metric $\tdist$ defined in Section \ref{secthausdorff}.
\begin{lemma}\label{lemaestabilidad}
 If $\phi$ is positive expansive then every point is stable for $\phi^{-1}$ with uniform $\delta$.
\end{lemma}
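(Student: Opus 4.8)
The plan is to argue by contradiction following the sketch given in the introduction, exploiting the monotonicity properties of $\tdist$ established in Section \ref{secthausdorff} and the equivalent formulations of positive expansiveness from Section \ref{sectexpflow}. Suppose some point is not stable for $\phi^{-1}$ (or that no uniform $\delta$ works); then there is $\epsilon>0$, points $x_j$ and $y_j$ with $\dist(x_j,y_j)\to 0$, such that for every $g\in\rep^2$ one cannot keep $\dist(\phi^{-1}_{g(t)}(x_j,y_j))<\epsilon$ for all $t\geq 0$; equivalently (by Proposition \ref{posexpequiv2}) we may take $\epsilon$ below the expansive constant's companion $\delta$, so that $\rep^2_\epsilon(x_j,y_j)=\emptyset$. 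The first step is to shrink $\epsilon$ if necessary so that ``$\epsilon$-orbit segment'' forces the two points into a $\tilde\tau$-orbit segment, i.e. $\dist(\phi^{-1}_{g(t)}(x_j,y_j))<\epsilon$ for all $t\ge 0$ would already contradict positive expansiveness unless $y_j\in\phi_{I\tilde\tau}x_j$ — and the latter is ruled out for $j$ large since $\dist(x_j,y_j)\to 0$ while $y_j\neq x_j$ would sit on a short orbit arc, which we can exclude by a compactness/absence-of-small-periods argument.

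Next I would introduce, for each $j$, a \emph{maximal pair of times} $(T_x^j,T_y^j)$ for $(\epsilon,x_j,y_j)$ with respect to the inverse flow: since $\rep^2_\epsilon(x_j,y_j)=\emptyset$, the set $T_\epsilon(x_j,y_j)$ (for $\phi^{-1}$) is bounded, hence the sum-norm attains a maximum on its closure. Set $a_j=\phi^{-1}_{T_x^j}x_j$ and $b_j=\phi^{-1}_{T_y^j}y_j$; by definition of maximality $\dist(a_j,b_j)=\epsilon$. Passing to a subsequence, $a_j\to x_*$, $b_j\to y_*$ with $\dist(x_*,y_*)=\epsilon>0$. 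Because $\dist(x_j,y_j)\to 0$ and a short $\rep^2$-path keeps the orbits $\epsilon$-close near time $0$, the maximal times go to infinity, so both $x_*$ and $y_*$ lie in the closure of the relevant orbit segments and satisfy $T_\epsilon(x_*,y_*)$ unbounded — using Proposition \ref{thomas} and Lemma \ref{lemareparamx'} one upgrades this to $\dist_F(x_*,y_*)\leq\epsilon'$ for an $\epsilon'$ slightly above $\epsilon$ but still below the expansive constant. Then Proposition \ref{posexpequiv1} applied to the inverse flow (which is also positive expansive, with the same constants up to the time reversal) forces $x_*$ and $y_*$ into a short orbit segment; say $y_*=\phi_{t_*}x_*$ with $|t_*|\le\tilde\tau$, and $t_*\neq 0$ since $x_*\neq y_*$.

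The crux is then to contradict the maximality of $(T_x^j,T_y^j)$. Having $y_*$ on the orbit of $x_*$ at a definite time $t_*\in[-\tilde\tau,\tilde\tau]\setminus\{0\}$, Proposition \ref{propmaxpair} asserts $\dist(\phi^{-1}_{T_x^j}x_j,\phi_{I\tilde\tau}(\phi^{-1}_{T_y^j}y_j))>\sigma$, i.e. $\dist(a_j,\phi_{I\tilde\tau}b_j)>\sigma$; but in the limit $a_j\to x_*$ and $\phi_{I\tilde\tau}b_j\to\phi_{I\tilde\tau}y_*\ni x_*$, so $\dist(a_j,\phi_{I\tilde\tau}b_j)\to 0$, a contradiction. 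I expect the main obstacle to be the careful bookkeeping in the middle step: choosing the maximal-time pair cleanly when $T_\epsilon$ is only closed rather than compact, ensuring the chosen $\rep^2$-witnesses converge (after subsequences) so that the limiting inequalities $\dist(x_*,y_*)=\epsilon$ and $T_\epsilon(x_*,y_*)$-unboundedness both survive, and handling the ``rests'' so that the extension argument of Proposition \ref{propmaxpair} is actually applicable (this is where the monotonicity of $\tdist$ along short backward orbit arcs, Proposition \ref{propmonotonia}, and the ability to pause one coordinate, enter decisively). The uniformity of $\delta$ is automatic from the contradiction scheme, since we allowed $x_j$ to vary.
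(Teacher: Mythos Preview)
Your overall architecture matches the paper's: argue by contradiction, pick for each $j$ a maximal pair of times $(T_x^j,T_y^j)$ for $(\epsilon,x_j,y_j)$ with respect to $\phi^{-1}$, pass to limits $x_*,y_*$, and play Proposition~\ref{propmaxpair} off against positive expansiveness. The order in which you invoke these two ingredients is reversed relative to the paper (the paper first uses Proposition~\ref{propmaxpair} to say $x_*,y_*$ are \emph{not} in a $\tilde\tau$-orbit segment, then uses Proposition~\ref{posexpequiv2} to say they \emph{are}), but that is immaterial.

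There is, however, a genuine slip in the middle step. You write that you will apply Proposition~\ref{posexpequiv1} ``to the inverse flow (which is also positive expansive, with the same constants up to the time reversal)''. That claim is unjustified: nothing you know at this point tells you $\phi^{-1}$ is positive expansive, and assuming it would be circular. The correct observation --- and the one the paper makes --- is a direction flip: the fact that $\dist(\phi^{-1}_{g(t)}(x_j,y_j))\leq\epsilon$ for $t\in[0,s_j]$ with $g(s_j)=(T_x^j,T_y^j)$ is exactly the statement that $\dist(\phi_{\tilde g(t)}(a_j,b_j))\leq\epsilon$ for $t\in[0,s_j]$ for a suitable $\tilde g\in\rep^2$, i.e.\ the \emph{forward} orbits of $a_j,b_j$ under $\phi$ stay $\epsilon$-close for times of total length $\|(T_x^j,T_y^j)\|\to\infty$. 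Passing to the limit gives that $T_{\epsilon'}(x_*,y_*)$ is unbounded \emph{for $\phi$}, and then Proposition~\ref{posexpequiv2} (for $\phi$, not $\phi^{-1}$) forces $y_*\in\phi_{I\tilde\tau}x_*$, contradicting Proposition~\ref{propmaxpair}. The detour through $\dist_F$ and Proposition~\ref{thomas} is unnecessary once you use Proposition~\ref{posexpequiv2} directly.

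A minor remark: your first paragraph tries to rule out $y_j\in\phi_{I\tilde\tau}x_j$ at the level of the approximating pairs; this is both unneeded and not quite right. What must be excluded is that the \emph{limit} pair $(x_*,y_*)$ lies on a $\tilde\tau$-orbit segment, and that is exactly the content of Proposition~\ref{propmaxpair}.
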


\begin{proof}
By Proposition \ref{posexpequiv2} there is an expansive constant
$\epsilon'>0$ such that if $T_{\epsilon'}(x,y)$ is not bounded then $y\in \phi_{I{\tilde\tau}}x$.
By contradiction assume that there is $\epsilon\in(0,\epsilon')$ and two sequences $x_j,y_j$ such that 
$T_\epsilon(x_j,y_j)$ is bounded for all $j\in\N$.
For each $j$ consider $(T_{x_j}, T_{y_j})$ a maximal pair of times for $(\epsilon,x_j,y_j)$ associated to $\phi^{-1}$.
By the continuity of the flow we have that $T_{x_j},T_{y_j}\to\infty$ as $j\to\infty$. 
Eventually taking subsequences, we can assume that $\phi_{T_{x_j}}x_j\to x_*$ and $\phi_{T_{y_j}}y_j\to y_*$. 
By Proposition \ref{propmaxpair} we have that $x_*$ and $y_*$ are not in a $\tilde\tau$-orbit segment.
Also, for every $T>0$ we have that there is $g\in\rep^2$ and $s>0$ such that 
$\dist(\phi_{g(t)}(x_*,y_*))< \epsilon'$ for all $t\in[0,s]$ and $\|g(s)\|\geq T$. 
So, $T_{\epsilon'}(x_*,y_*)$ is not bounded and 
it contradicts the positive expansiveness of the flow (as stated in Proposition \ref{posexpequiv2}) because $x_*$ and $y_*$ are not in a $\tilde\tau$-orbit segment.
\end{proof}

The following Lemma states the uniform asymptotic stability for $t\to -\infty$.

\begin{lemma}\label{uniform}
 If $\phi$ is positive expansive then for all $\epsilon>0$ there is $\delta>0$ such that for all $\sigma>0$ 
there is $T>0$ such that if $\dist(x,y)<\delta$ then there is $g\in\rep^2_\epsilon(x,y)$ such that 
$\dist(\phi^{-1}_{g(t)}(x,y))<\sigma$ if $\|g(t)\|\geq T$.
\end{lemma}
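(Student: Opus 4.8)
The plan is to combine the non-uniform stability already obtained in Lemma \ref{lemaestabilidad} with a compactness/contradiction argument, exactly in the spirit of how asymptotic stability is upgraded to uniform asymptotic stability for Lyapunov stable flows. First I would fix $\epsilon>0$ and let $\epsilon'>0$ be the expansive constant from Proposition \ref{posexpequiv2}, arranged so that $\epsilon<\epsilon'$ and also $\epsilon'$ is smaller than the constant coming from Lemma \ref{lemaestabilidad}; this guarantees that for all $x$ there is a uniform $\delta>0$ with the property that $\dist(x,y)<\delta$ implies $T_\epsilon(x,y)$ is unbounded for the inverse flow, i.e. $\rep^2_\epsilon(x,y)\neq\emptyset$. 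That $\delta$ is the one the statement asks for; the content of the Lemma is the \emph{uniform} time $T$ controlling how fast the pair gets $\sigma$-close.

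Next, for the asymptotic part I would argue by contradiction. Suppose the conclusion fails: there is $\sigma>0$ and sequences $x_j,y_j\in X$ with $\dist(x_j,y_j)<\delta$ such that for \emph{every} $g\in\rep^2_\epsilon(x_j,y_j)$ there are times $t$ with $\|g(t)\|$ arbitrarily large but $\dist(\phi^{-1}_{g(t)}(x_j,y_j))\geq\sigma$. For each $j$ pick such a $g_j$ and a time $s_j$ with $\|g_j(s_j)\|\to\infty$ and $\dist(\phi^{-1}_{g_j(s_j)}(x_j,y_j))\geq\sigma$. Along the reparametrized trajectories, between $0$ and $s_j$ the pair stays $\epsilon$-close; set $a_j=\phi^{-1}_{\pi_1 g_j(s_j)}x_j$, $b_j=\phi^{-1}_{\pi_2 g_j(s_j)}y_j$ and pass to subsequences so that $a_j\to a_*$, $b_j\to b_*$. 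Then $\dist(a_*,b_*)\geq\sigma>0$, so $a_*\neq b_*$, and in fact they need not even lie on a common orbit a priori. But by continuity (reparametrizing the already-$\epsilon$-close segments and using Lemma \ref{lemareparamx'} together with Proposition \ref{thomas} as in Lemma \ref{lemaestabilidad}) one checks that $T_{\epsilon'}(a_*,b_*)$ is unbounded for $\phi^{-1}$: every finite-length $\epsilon$-close segment ending near $(a_*,b_*)$ can be prolonged backwards, because $(a_j,b_j)$ itself sat in the middle of a long $\epsilon$-close segment, whose remaining future pushes $(a_*,b_*)$ along. Hence by positive expansiveness (Proposition \ref{posexpequiv2}) $b_*\in\phi_{I\tilde\tau}a_*$, so $a_*$ and $b_*$ lie on a short orbit segment.

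Now I would derive the contradiction from stability. Since $a_*$ and $b_*$ are on a common orbit at bounded parameter, one can choose $g^*\in\rep^2$ making $\dist(\phi^{-1}_{g^*(t)}(a_*,b_*))$ as small as one wishes for all $t\geq 0$ — indeed, using rests, one reparametrizes so that the two points sit at the same place up to a tiny shift; more precisely Lemma \ref{lemaestabilidad} gives uniform stability of the inverse flow, but here we want the quantitative fact that two points on the \emph{same} short orbit segment can be synchronized to within any prescribed $\eta$ by a pair of reparameterizations with rests, which is immediate from the definition of $\rep^2$ and uniform continuity of $\phi$ on compact time intervals. Splicing this $g^*$ onto the tails of the segments used for $(a_j,b_j)$ (legitimate because $\rep^2$ is closed under such concatenations and the approximation $a_j\to a_*$, $b_j\to b_*$ is uniform), one produces, for large $j$, a $g\in\rep^2_\epsilon(x_j,y_j)$ along which the pair is eventually $\sigma$-close for all large $\|g(t)\|$ — contradicting the choice of $g_j$. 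Finally, extracting the uniform $T$: once we know that for \emph{each} pair with $\dist(x,y)<\delta$ there exists \emph{some} $g$ with $\dist(\phi^{-1}_{g(t)}(x,y))<\sigma$ for all large $\|g(t)\|$, a further compactness argument over the (compact) space of such pairs, together with continuity of the flow, yields a single $T$ that works simultaneously for all of them; this is the standard passage from pointwise to uniform and I would phrase it as: if no uniform $T$ existed, we could extract sequences as above with the $\sigma$-closeness failing at parameters tending to infinity, which is precisely the contradiction just obtained.

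The main obstacle I expect is the bookkeeping with reparameterizations with rests when concatenating the "long $\epsilon$-close segment for $(a_j,b_j)$" with the "synchronizing tail $g^*$ for $(a_*,b_*)$": one must check that after the perturbation $a_j\rightsquigarrow a_*$ the concatenated reparameterization still keeps the original pair $(x_j,y_j)$ within $\epsilon$ (not just $\epsilon'$) on the overlap region, which forces a careful choice of the gap $\epsilon'-\epsilon$ and of $\eta$, and an appeal to uniform continuity of $\phi$ on $I\tilde\tau$. Everything else is the same compactness-and-contradiction skeleton already used in Lemmas \ref{lemaestabilidad} and the Hausdorff-metric monotonicity from Section \ref{secthausdorff}.
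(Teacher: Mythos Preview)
Your overall strategy --- contradiction, pass to limits along a long $\epsilon$-close segment, invoke positive expansiveness and the stability of Lemma~\ref{lemaestabilidad} --- matches the paper's. But the execution has a genuine gap in how you choose the pair $(a_j,b_j)$ and in how you state the negation.

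You negate the conclusion as: for every $g\in\rep^2_\epsilon(x_j,y_j)$ there are times with $\|g(t)\|$ \emph{arbitrarily large} and $\dist\geq\sigma$. That is stronger than what the actual negation yields. The correct negation gives, for each $T_n\to\infty$, a pair $(x_n,y_n)$ such that every $g\in\rep^2_\epsilon(x_n,y_n)$ admits \emph{one} time $t$ with $\|g(t)\|\geq T_n$ and $\dist\geq\sigma$. Because you then take $(a_j,b_j)$ at such a $\sigma$-far time $s_j$, any reparametrization you build by splicing a synchronizing tail $g^*$ onto $g_j$ beyond $s_j$ still passes through the point $(a_j,b_j)$ itself, where $\dist\geq\sigma$ and $\|g(s_j)\|\geq T_j$. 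So the spliced $g$ does \emph{not} violate the correctly stated hypothesis, and your contradiction evaporates. Your two-step plan (first pointwise asymptotic, then ``standard passage to uniform $T$'') does not rescue this: the second step is not standard here, precisely because the spliced reparametrizations you construct always retain a bad time in the window $[T_n,\|g(s_j)\|]$.

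The paper avoids this by choosing the pair at a \emph{different} moment: it fixes $g_n\in\rep^2_\epsilon(x_n,y_n)$ and takes $(u_n,v_n)=\phi^{-1}_{g_n(t_n)}(x_n,y_n)$ at the time $t_n$ with $\|g_n(t_n)\|=T_n-\tilde\tau$, i.e.\ just \emph{before} the threshold $T_n$. It then applies Lemma~\ref{lemaestabilidad} a second time, at level $\sigma$, to obtain $\delta'>0$ with $\rep^2_\sigma(u,v)\neq\emptyset$ whenever $\dist(u,v)<\delta'$. If $(u_n,v_n)$ were within $\delta'$ of each other after some shift of size $\leq\tilde\tau$, one could splice a $\sigma$-close tail onto $g_n$ at $t_n$; since the shift costs at most $\tilde\tau$ in $\|\cdot\|$, the resulting $g$ would satisfy $\dist<\sigma$ for \emph{all} $t$ with $\|g(t)\|\geq T_n$, contradicting the negation. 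Hence $(u_n,v_n)$ are uniformly bounded away from each other's $\tilde\tau$-orbit segment, and their limits directly violate positive expansiveness via Proposition~\ref{posexpequiv2}. The crucial point you are missing is this placement of the pair \emph{before} $T_n$ rather than at a $\sigma$-far time; everything else in your outline is correct in spirit.
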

\begin{proof}
Given $\epsilon>0$ smaller than an expansive constant, consider $\delta>0$ from Lemma \ref{lemaestabilidad}. 
By contradiction we will show that this value of $\delta$ works.
So, suppose that there is $\sigma>0$, $T_n\to\infty$ and
$x_n,y_n\in X$ such that $\dist(x_n,y_n)<\delta$ and 
\begin{equation}\label{lema3.6*1}
\begin{array}{l}
\hbox{for all $g\in\rep^2_\epsilon(x_n,y_n)$ there is $t\geq 0$ such that}\\
\hbox{$\|g(t)\|\geq T_n$ and $\dist(\phi^{-1}_{g(t)}(x_n,y_n))\geq\sigma$.} 
\end{array}
\end{equation}
Again by Lemma \ref{lemaestabilidad} there is $\delta'$ such that 
\begin{equation}\label{3.6**}
 \hbox{if $\dist(u,v)<\delta'$ then 
$\rep^2_\sigma(u,v)$ is not empty.}
\end{equation}

For each $n$ take $g_n\in\rep^2_\epsilon(x_n,y_n)$ and
consider $t_n$ such that $\|g_n(t_n)\|= T_n-\tilde\tau$. 
Let $(u_n,v_n)=\phi^{-1}_{g_n(t_n)}(x_n,y_n)$.
By conditions (\ref{lema3.6*1}) and (\ref{3.6**}) there is $\delta''$ such that 
$\dist(\phi_tu_n,v_n)\geq\delta''$ and 
$\dist(u_n,\phi_tv_n)\geq\delta''$ if $|t|\leq \tilde\tau$.
So, limit points of $u_n$ and $v_n$ are not in a $\tilde\tau$-orbit segment and contradict positive expansiveness.
\end{proof}

\section{Positive expansiveness}\label{sectposexpflow}

In that Section we prove the main result of the article for flows without singular points.
First we show that positive expansive flows has periodic orbits. 
The idea to find such trajectories is to show that there is a compact invariant set that is 
a suspension and apply the result for positive expansive homeomorphisms.

\begin{lemma}\label{periodicas}
Every positive expansive flow has at least one periodic orbit.
\end{lemma}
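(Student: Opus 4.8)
The plan is to produce a nonempty compact invariant set on which the flow is conjugate to a suspension of a positive expansive homeomorphism, and then invoke the known structure of such homeomorphisms. Since $\phi$ has no singular points and has no arbitrarily small periods, there is a uniform lower bound on periods; so it suffices to produce one compact invariant set carrying a global cross section. First I would fix an expansive constant $\epsilon'$ and use Lemma~\ref{lemaestabilidad} together with Lemma~\ref{uniform} to get uniform past-asymptotic stability: there is $\delta>0$ (uniform) such that if $\dist(x,y)<\delta$ then there is $g\in\rep^2_\epsilon(x,y)$ with $\dist(\phi^{-1}_{g(t)}(x,y))\to 0$ as $\|g(t)\|\to\infty$. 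This says that $\omega$-limit sets (for $\phi^{-1}$, i.e.\ $\alpha$-limit sets of $\phi$) are small, which is the mechanism forcing recurrence to be periodic.

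Next I would build the cross section. Pick any point $x\in X$ and look at $\Lambda=\alpha(x)$, a nonempty compact invariant set; by restriction the flow on $\Lambda$ is still positive expansive and still has no singularities and no small periods. Now I would argue that some orbit in $\Lambda$ is periodic directly: take a recurrent point $z\in\Lambda$ (e.g.\ a point in a minimal subset of $\Lambda|_{\phi^{-1}}$), and a time $t_k\to\infty$ with $\phi^{-1}_{t_k}z\to z$, hence $\dist(\phi^{-1}_{t_k}z,z)<\delta$ for large $k$. By the uniform past stability applied to the pair $(z,\phi^{-1}_{t_k}z)$, the two backward orbits stay $\epsilon'$-close under a pair of reparameterizations with rests for all forward time, so $T_{\epsilon'}(z,\phi^{-1}_{t_k}z)$ is unbounded; positive expansiveness (Proposition~\ref{posexpequiv2}) then forces $\phi^{-1}_{t_k}z=\phi_sz$ for some $|s|\le\tilde\tau$, i.e.\ $\phi_{t_k+s}z=z$ with $t_k+s$ large and positive. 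Hence the orbit of $z$ is periodic. (This is essentially the ``every orbit is periodic'' argument run on the recurrent point, and it only needs one recurrent point, which any compact invariant set provides.)

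Alternatively, and perhaps cleaner to write, I would realize the suspension picture: using the uniform stability one shows every point has a ``stable holonomy'' back to a local cross section, producing a closed subset $\Sigma\subset\Lambda$ transverse to the flow and a return map $R\colon\Sigma\to\Sigma$ which is a positive expansive homeomorphism on the compact metric space $\Sigma$; by the cited result \cites{Lew,CK}, $\Sigma$ is finite, and a flow over a finite cross section consists of finitely many periodic orbits — in particular there is at least one. Either route gives the conclusion.

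The main obstacle I expect is the transversality/return-map construction: turning the reparameterized asymptotic stability (which only controls orbits up to time change, and allows ``rests'') into an honest Poincaré section with a genuine homeomorphism return map, in a space that is merely compact metric with no manifold or local-connectedness hypotheses. The recurrent-point argument of the previous paragraph sidesteps this by never building a section at all — it extracts periodicity of a single orbit purely from positive expansiveness plus uniform past stability — so I would expect the actual proof to take that shorter route, using the suspension/cross-section language only informally for motivation.
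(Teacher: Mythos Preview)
Your second alternative is exactly what the paper does, not merely informal motivation: the proof fixes a backward-recurrent point $x$, builds a compact local cross section $S\subset B_\delta(x)$, and uses Lemma~\ref{uniform} to define a map $f\colon S\to S$ by flowing backward until the orbit of $y$ is $r/2$-close to $\phi^{-1}_{t_N}x$ and then projecting to $S$. The substantive work is precisely the obstacle you flagged --- showing $f$ is well-defined independently of the chosen $g\in\rep^2_\epsilon(x,y)$ --- and the paper handles it by first fixing $\epsilon'$ so small that any $(\tilde h,\tilde h')\in\rep^2_{2\epsilon'}(y,y)$ satisfies $|\tilde h(t)-\tilde h'(t)|<\tilde\tau/2$, which forces different choices to land in the same plaque of the flow box. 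After checking continuity, one restricts to $K=\bigcap_{n\geq 0}f^n(S)$, observes $f|_K$ is a negative expansive homeomorphism, and concludes $K$ is finite.

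Your ``shorter route'' --- the direct recurrent-point argument --- has a genuine gap, and the paper does \emph{not} take it. Past stability (Lemmas~\ref{lemaestabilidad} and~\ref{uniform}) controls the $\phi^{-1}$-orbits: it gives $g\in\rep^2_\epsilon(z,\phi^{-1}_{t_k}z)$, meaning $\dist(\phi^{-1}_{g(t)}(z,\phi^{-1}_{t_k}z))<\epsilon$ for all $t\geq 0$. But $T_{\epsilon'}(x,y)$ in Proposition~\ref{posexpequiv2} is defined with the \emph{forward} flow $\phi$, and positive expansiveness is a hypothesis on forward orbits only. Backward closeness of $z$ and $\phi^{-1}_{t_k}z$ does not make $T_{\epsilon'}(z,\phi^{-1}_{t_k}z)$ unbounded; to invoke Proposition~\ref{posexpequiv2} you would need the forward orbits $\phi_s z$ and $\phi_s(\phi^{-1}_{t_k}z)=\phi_{s-t_k}z$ to stay close for all $s\geq 0$ under some reparameterization, which nothing in the stability lemmas provides (the forward orbit of $z$ is entirely uncontrolled). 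The asymmetry between forward expansiveness and backward stability is exactly why the paper builds the section: it converts backward-asymptotic information into a finite discrete dynamical system rather than trying to feed it directly back into the forward expansiveness hypothesis.
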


\begin{proof}
Consider $\epsilon'>0$ such that for all $y\in X$
\begin{equation}\label{eqper1}
 \hbox{if } (\tilde h,\tilde h')\in \rep^2_{2\epsilon'}(y,y) \hbox{ then } |\tilde h(t)-\tilde h'(t)|<\tilde\tau/2\hbox{ for all } t\geq 0.
\end{equation}
This condition will be used bellow to show that the map $f$ is well defined. 
Take a recurrent point $x$ and $t_n\to +\infty$ such that $\phi^{-1}_{t_n}(x)\to x$.
For any $\epsilon\in(0,\epsilon')$ consider $\delta>0$ from Lemma \ref{uniform}. 
Let $S\subset B_\delta(x)$ be a compact local cross section of time $\tilde\tau$, $x\in S$, 
and consider the flow box $U=\phi_{[-\tilde\tau,\tilde\tau]}(S)$. 
Consider $r>0$ such that
\begin{equation}\label{eqper2}
 \phi_{[-\tilde\tau/2,\tilde\tau/2]}B_r(x)\subset U.
\end{equation}
For $\sigma=r/2$ in Lemma \ref{uniform} take the corresponding $T>0$. 
Let $N>0$ be such that $\dist(\phi^{-1}_{t_N}x,x)<r/2$ and $t_N>T$. 
By Lemma \ref{uniform}, for all $y\in S$ ($S\subset B_\delta(x)$) there is $g\in\rep^2_\epsilon(x,y)$ such that:
$$\dist(\phi^{-1}_{g(t)}(x,y))<\sigma=r/2$$
if $\|g(t)\|\geq T$. If $g=(h_x,h_y)$ there is $s\geq 0$ such that $h_x(s)=t_N$. 
Then $\|g(s)\|\geq T$ and $\phi^{-1}_{h_y(s)}y\in B_r(x)\subset U$. 
Consider $\pi\colon U\to S$ the projection on the flow box.
Let $f\colon S\to S$ be defined by 
\[
 f(y)=\pi(\phi^{-1}_{h_2(s)}y)
\]
if $s\geq 0$ and $g=(h_1,h_2)\in\rep^2_\epsilon(x,y)$ satisfies:
\begin{enumerate}
\item $h_1(s)=t_N$ and
\item $\phi^{-1}_{h_2(s)}y\in B_r(x)$.
\end{enumerate}
We have shown that for all $y\in S$ there are $s$ and $g$ satisfying this conditions. 

In this paragraph we will show that $f$ is well defined, 
i.e. do not depend on $g$ and $s$. 
Consider $s,s'\geq 0$ and $g=(h_1,h_2),g'=(h_1',h_2')\in\rep^2_\epsilon(x,y)$ 
satisfying both items above. 
Recall that $\epsilon'>\epsilon$ and consider two increasing reparameterizations 
$\hat h_1$ and $\hat h'_1$ such that 
\begin{itemize}
 \item $\dist(\phi^{-1}_{\hat h_1(t)}x,\phi^{-1}_{h_2(t)}y)<\epsilon'$ for all $t\geq0$,
 \item $\dist(\phi^{-1}_{\hat h'_1(t)}x,\phi^{-1}_{h'_2(t)}y)<\epsilon'$ for all $t\geq0$ and
 \item $\hat h_1(s)=t_N=\hat h'_1(s')$.
\end{itemize}
So, if we define 
$(\tilde h,\tilde h')=(h_2\circ \hat h_1^{-1},h'_2\circ \hat h'^{-1}_1)$ we have that 
\begin{itemize}
 \item $\dist(\phi^{-1}_{t}x,\phi^{-1}_{\tilde h(t)}y)<\epsilon'$ for all $t\geq0$,
 \item $\dist(\phi^{-1}_{t}x,\phi^{-1}_{\tilde h'(t)}y)<\epsilon'$ for all $t\geq0$,
 \item $h_2(s)=\tilde h(t_N)$ and $h'_2(s')=\tilde h'(t_N)$.
\end{itemize}
and by the triangular inequality 
\[
 \dist(\phi^{-1}_{\tilde h(t)}y,\phi^{-1}_{\tilde h'(t)}y)<2\epsilon'
\]
for all $t\geq 0$. Then by condition (\ref{eqper1}) we have that 
$$|h_2(s)-h'_2(s)|=|\tilde h(t_N)-\tilde h'(t_N)|<\tilde\tau/2.$$
This inequality joint with equation (\ref{eqper2}) and 
the fact that $\phi^{-1}_{h_2(s)}y,\phi^{-1}_{h'_2(s')}y\in B_r(x)$ implies that 
the points $\phi^{-1}_{h_2(s)}y$ and $\phi^{-1}_{h'_2(s')}y$ are in the same orbit segment contained in the flow box $U$. 
So, they have the same projection in section $S$ and $f$ is well defined.

Now we will show that $f$ is continuous. 
Given $y\in S$ consider $s\geq 0$ and $g=(h_1,h_2)\in\rep^2_\epsilon(x,y)$ satisfying the definition 
of $f(y)$. 
Consider $\rho>0$ such that for all $y'\in B_\rho (y)\cap S$ 
we have that $\phi^{-1}_{h_2(s)}y'\in B_r(x)$. 
Then the continuity of $f$ follows by the continuity of the flow $\phi$ and the continuity of the projection $\pi$.

Now one can restrict $f$ to the compact invariant set $$K=\cap_{n\geq 0} f^n(S)$$ and 
notice that $f$ is a negative expansive homeomorphisms on $K$ because $\phi$ is positive expansive in $\phi_\R(K)$. 
We conclude that $K$ is finite and $f$ has periodic points. So $\phi$ has periodic orbits.
\end{proof}

\begin{teo}\label{teo1}
 If $\phi$ is a positive expansive flow without singular points then $X$ is the union of a finite number of periodic orbits.
\end{teo}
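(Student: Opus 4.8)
The plan is to use the stability results already established, together with the existence of periodic orbits from Lemma~\ref{periodicas}, and a standard $\omega$-limit argument. First I would show that \emph{every} orbit is periodic. Arguing by contradiction, suppose $x\in X$ is not on a periodic orbit. Since $X$ is compact, $\omega(x)$ is nonempty, compact and $\phi$-invariant, and since positive expansiveness is an intrinsic property (it passes to closed invariant subsets, the expansive constant only improving), the restriction of $\phi$ to $\omega(x)$ is again a positive expansive flow without singular points. By Lemma~\ref{periodicas} applied to this restriction, $\omega(x)$ contains a periodic orbit $\gamma$. Now I would derive a contradiction from the uniform past-asymptotic stability of Lemma~\ref{uniform}: since $x$ accumulates on $\gamma$ in forward time, there is a sequence $t_n\to+\infty$ with $\phi_{t_n}x\to p\in\gamma$; applying the stability of $p$ (equivalently, of points near $p$) for the inverse flow forces $x$ to lie on the same orbit segment as $p$ up to reparameterization, hence on $\gamma$ itself, contradicting the assumption that $x$ is not periodic. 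The precise mechanism is the one sketched in the introduction: $\dist(\phi_{t_n}x,p)$ is eventually smaller than the $\delta$ of Lemma~\ref{lemaestabilidad}, so $\phi_{t_n}x$ and $p$ are $\epsilon$-Fréchet-close under $\phi^{-1}$ for all $n$; pushing $n\to\infty$ and using that $\gamma$ is periodic (so $\phi^{-1}_t p$ stays on the compact orbit $\gamma$) one concludes $x\in\phi_\R(\gamma)$.

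Once every orbit is periodic, I would show there are only finitely many of them. Take $\epsilon$ smaller than an expansive constant and let $\delta>0$ be the uniform stability constant from Lemma~\ref{lemaestabilidad} (or Lemma~\ref{uniform}). I claim each periodic orbit $\gamma$ has a $\delta/2$-neighborhood meeting no other periodic orbit: if $y$ with $\dist(x,y)<\delta$ lies on a periodic orbit for some $x\in\gamma$, then by stability for $\phi^{-1}$ together with the uniform asymptotic estimate of Lemma~\ref{uniform}, the backward orbit of $y$ is Fréchet-asymptotic to that of $x$; but both orbits are periodic, hence compact, so ``asymptotic'' forces the two $\omega$-limit-in-reverse sets to coincide, i.e. $y\in\phi_\R(\gamma)$. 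Thus distinct periodic orbits are $\delta$-separated in the Hausdorff sense (after passing to $\tdist$, which is harmless since positive expansiveness and the conclusion are topological), and by compactness of $X$ — or of $(\K,\dist_H)$ — there can be only finitely many. Since every orbit is periodic, $X$ is the finite union of these orbits.

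The main obstacle, I expect, is making the ``compact asymptotic orbits must coincide'' step fully rigorous in the reparameterized setting: Fréchet/Lyapunov closeness under $\phi^{-1}$ gives a pair of reparameterizations with rests keeping $\phi^{-1}_{g(t)}(x,y)$ within $\epsilon$, and with Lemma~\ref{uniform} within $\sigma$ eventually, but one must convert this into the genuine statement $y\in\phi_{I\tilde\tau}x$, or rather $y$ and $x$ on the same orbit. For periodic orbits this should follow by taking $\sigma$ arbitrarily small and invoking positive expansiveness (Proposition~\ref{posexpequiv2}) at a limit point, exactly as in the last lines of the proof of Lemma~\ref{uniform}: if $\phi^{-1}_{g(t)}(x,y)$ can be made to stay within an arbitrarily small $\sigma$ for unbounded times while $x,y$ are not on a common $\tilde\tau$-segment, pass to limits to contradict expansiveness. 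The bookkeeping with the two reparameterizations (which may rest) and the passage to limits of the rest points is the delicate part; everything else is a routine compactness argument.
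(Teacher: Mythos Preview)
Your approach is exactly the one the paper takes: apply Lemma~\ref{periodicas} to the compact invariant set $\omega(x)$ to produce a periodic orbit inside it, then use the past asymptotic stability of Lemma~\ref{uniform} to force $x$ onto that orbit, and finally use Lemma~\ref{uniform} again to conclude finiteness. The paper's own proof is three sentences long and leaves to the reader precisely the reparameterization bookkeeping you flag as delicate; your elaboration of that step (tracking $\phi_{t_n}x$ backward with a pair of reparameterizations and using $t_n>T$ to land near $\gamma$ at the moment the second clock reads $t_n$) is the right way to fill it in.
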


\begin{proof}
 First we show that every orbit is periodic. 
By contradiction assume that there is a point whose orbit is non-compact. 
By Lemma \ref{periodicas} there is a periodic orbit contained in $\omega(x)$. 
But it contradicts Lemma \ref{uniform}. 
Again by Lemma \ref{uniform} there is just a finite number of periodic orbits and the proof ends.
\end{proof}

\section{Singular flows}\label{secsing}

Now we consider positive expansive flows with singular points. 
A change in the definition is needed because singularities are isolated points of the space if the flow is expansive according to Definition \ref{BWexp}
(even if one consider expansiveness instead of positive expansiveness). 
So, for singular flows we consider the following definition. 

\begin{df}\label{komuroexp}
 A continuous flow $\phi$ in a compact metric space $X$ is \emph{positive expansive} if for all $\epsilon>0$  
there is $\delta>0$ such that 
if $\dist(\phi_{h(t)}x,\phi_ty)<\delta$ for all $t\geq 0$, with $x,y\in X$ and $h\in\reparam$, 
then $x$ and $y$ are in an orbit segment of diameter smaller than $\epsilon$.
\end{df}

This is the \emph{positive} adaptation of the definition given in \cite{Ar} for expansive flows with singular points.
Definitions \ref{BWexp} and \ref{komuroexp} coincide if the flow has not singular points.
\begin{teo}\label{teo2}
If $\phi$ is a positive expansive flow with singular points then $X$ is the union of finite periodic orbits and singularities.
\end{teo}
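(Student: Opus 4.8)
The plan is to reduce Theorem \ref{teo2} to Theorem \ref{teo1} by removing the singular set from consideration. Let $\sing(\phi)$ denote the set of singular points of $\phi$. The first step is to check that $\sing(\phi)$ is closed (hence compact) and that $X\setminus\sing(\phi)$ is an open invariant set; this is immediate from continuity of the flow. The heart of the argument is to show that $\sing(\phi)$ is in fact an \emph{open} subset of $X$, so that it is a finite union of isolated points, each of which is a singular orbit. To see this, suppose $p\in\sing(\phi)$ is not isolated, so there are regular points $y_j\to p$. Since $\phi_tp=p$ for all $t$, the reparameterization $h=\mathrm{id}$ gives $\dist(\phi_{h(t)}p,\phi_ty_j)$ small for all $t\geq 0$ once $y_j$ is close to $p$ \emph{and stays close}; more carefully, one uses uniform continuity on a window and the fact that $\omega(y_j)$ accumulates. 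The cleanest route is to invoke positive expansiveness (Definition \ref{komuroexp}) directly at $p$: for the given $\epsilon$ pick $\delta$; then any $y$ with $\dist(\phi_{h(t)}p,\phi_ty)<\delta$ for all $t\geq 0$ must lie in an orbit segment through $p$ of diameter $<\epsilon$, and since $p$ is singular that segment is $\{p\}$, forcing $y=p$. So one needs to produce, for $y$ near $p$, a reparameterization $h$ keeping $\phi_ty$ within $\delta$ of $p$ for all positive time; this is where the structure near a singularity is used.

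The mechanism for producing such an $h$ is the standard trick of \emph{slowing time to a near-halt}: near a singular point the vector field is small, so a regular orbit starting near $p$ moves slowly, and by taking $h$ with very small derivative (a reparameterization with rests is even permitted, which is what Definition \ref{komuroexp} allows) one can keep the image within $\delta$ of $p$. More precisely, I would argue: given $\epsilon>0$ take $\delta$ from Definition \ref{komuroexp}; shrink to a ball $B_\delta(p)$; for $y\in B_{\delta'}(p)$ with $\delta'$ small, if $\orb^+(y)$ ever left $B_\delta(p)$ let $s$ be the first exit time, and build $h$ equal to $\mathrm{id}$ reparameterized so that $h(t)$ increases only while $\phi_{h(t)}y$ would otherwise stay in the ball — i.e. use a rest. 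If $\orb^+(y)$ never leaves $B_\delta(p)$, take $h=\mathrm{id}$ outright and conclude $y=p$ directly. The case where $\orb^+(y)$ does leave the ball is handled by noting its first-exit point lies on $\partial B_\delta(p)$, contradicting that we can keep distances small — this forces, again via positive expansiveness, $y$ onto the (trivial) orbit of $p$.

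The main obstacle I anticipate is handling the regular points \emph{near but not equal to} a singularity once we know $\sing(\phi)$ is open and finite: we then have $X=\sing(\phi)\sqcup(X\setminus\sing(\phi))$ with $X\setminus\sing(\phi)$ open and invariant but not compact, so Theorem \ref{teo1} does not apply verbatim. The fix is that $\overline{X\setminus\sing(\phi)}$ may touch $\sing(\phi)$, but since $\sing(\phi)$ is open, $X\setminus\sing(\phi)$ is also closed, hence compact; the restriction $\phi|_{X\setminus\sing(\phi)}$ is then a positive expansive flow without singular points on a compact space, and it satisfies Definition \ref{BWexp} because on this set Definitions \ref{BWexp} and \ref{komuroexp} coincide (as noted right before the theorem). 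Applying Theorem \ref{teo1} to this compact invariant piece gives that $X\setminus\sing(\phi)$ is a finite union of periodic orbits. Combining, $X$ is a finite union of periodic orbits and singular points, which proves Theorem \ref{teo2}.
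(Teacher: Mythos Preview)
Your overall strategy---show that $\sing(\phi)$ is open, hence $X\setminus\sing(\phi)$ is compact and invariant without singularities, then apply Theorem~\ref{teo1}---matches the paper's. The gap is in how you try to show a singular point $p$ is isolated.

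Your argument hinges on producing, for a regular point $y$ close to $p$, a reparameterization $h$ with $\dist(\phi_{h(t)}p,\phi_t y)<\delta$ (or with the roles swapped) for all $t\geq 0$. But $\phi_{h(t)}p=p$ for every $h$, and in the swapped version $h\in\reparam$ is still surjective, so either way the condition reduces to the \emph{entire forward orbit} of $y$ lying in $B_\delta(p)$. No ``slowing time'' or ``rest'' can manufacture this: reparameterizing the singular point is vacuous, and reparameterizing $y$ only reorders the points of its forward orbit without discarding any of them. (Incidentally, Definition~\ref{komuroexp} uses $h\in\reparam$, not $\reparamx$, so rests are not even permitted there.) Your treatment of the case where $\orb^+(y)$ leaves $B_\delta(p)$---``its first-exit point lies on $\partial B_\delta(p)$, contradicting that we can keep distances small''---is not an argument: nothing prevents a forward orbit from leaving a neighbourhood of a singularity (think of a source).

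The paper closes this gap by working with \emph{backward} time. First it proves that $p$ is stable for $\phi^{-1}$: if not, there are $x_n\to p$ and $t_n\geq 0$ with $\dist(\phi^{-1}_{t_n}x_n,p)=\epsilon$; a limit point $q$ of $\phi^{-1}_{t_n}x_n$ then satisfies $q\neq p$ and $\phi_t q\to p$ as $t\to\infty$, so $(p,q)$ violates positive expansiveness. This gives $\delta>0$ with $\phi^{-1}_{[0,\infty)}B_\delta(p)\subset B_\epsilon(p)$. Now suppose some regular $x$ lies in $B_\delta(p)$. Its forward orbit must leave $B_\epsilon(p)$ (else expansiveness forces $x=p$), so $\omega(x)$ is nonempty; backward stability of singularities keeps $\omega(x)$ free of singular points; Theorem~\ref{teo1} applied to $\omega(x)$ produces a periodic orbit there; and then the backward stability of that periodic orbit (Lemma~\ref{lemaestabilidad}) contradicts the fact that the backward orbit of points along the forward orbit of $x$ must return near $p$. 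Hence $B_\delta(p)=\{p\}$, and the reduction to Theorem~\ref{teo1} goes through. The missing idea in your write-up is precisely this backward-stability step and the use of $\omega(x)$ together with Lemma~\ref{lemaestabilidad}; you cannot get isolation of $p$ from Definition~\ref{komuroexp} alone by a reparameterization trick.
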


\begin{proof}
Let $\epsilon>0$ be an expansive constant. 
We will show that singularities are stable for $\phi^{-1}$. 
By contradiction assume there is $x_n\to p$, $x_n\neq p$, $p$ a singular point, and 
for all $n\in\N$ there is $t_n\geq 0$ such that $\dist(\phi^{-1}_{t_n}x_n,p)=\epsilon$. 
If $y_n=\phi^{-1}_{t_n}x_n$ converges to $q$, then $q\neq p$ and $\phi_tq\to p$ as $t\to\infty$. 
So, $p$ and $q$ contradict the positive expansiveness of the flow. 
Therefore there is $\delta>0$ such that if $\dist(x,p)<\delta$ then $\phi^{-1}_tx\in B_\delta(p)$ for all $t\geq 0$. 
We will show that $B_\delta(p)=\{p\}$. By contradiction suppose there is $\dist(x,p)\in (0,\delta)$. 
By hypothesis there is $t>$ such that $\phi_tx\notin B_\epsilon(p)$. So $x$ is not periodic. 
By the stability of singularities there is no singular point in $\omega(x)$. Then $\omega(x)$ is positive expansive, connected and free of singularities. 
By Theorem \ref{teo1} it is a periodic orbit. But this contradicts the stability of periodic orbits, i.e. Lemma \ref{lemaestabilidad}.
So, singular points are isolated points of $X$ and the proof is reduced to Theorem \ref{teo1}.
\end{proof}


\begin{bibdiv}
\begin{biblist}
\bib{Ar}{article}{
author={A. Artigue},
title={Expansive flows of surfaces},
journal={Disc. \& cont. dyn. sys.},
volume={33},
number={2},
pages={505--525},
year={2013}}
\bib{BW}{article}{
author={R. Bowen and P. Walters}, title={Expansive one-parameter
flows}, journal={J. Diff. Eq.}, year={1972}, pages={180--193},
volume={12}}
\bib{CK}{article}{
author={E. M. Coven},
author={M. Keane},
title={Every compact metric space that
supports a positively expansive
homeomorphism is finite},
year={2006},
volume={48},
pages={304--305},
journal={IMS Lecture Notes–Monograph Series, Dynamics \& Stochastics}}
\bib{Fre}{article}{
author={M. Fréchet},
title={Sur quelques points du calcul fonctionnel},
year={1906},
journal={Rend. Circ. Mat. Palermo},
number={22},
pages={1--71}}
\bib{Lew}{book}{
author={J. Lewowicz},
year={2003},
title={Dinámica de los homeomorfismos expansivos},
publisher={Monografias del IMCA}}
\bib{K}{article}{
author={M. Komuro}, title={Expansive properties of Lorenz
attractors}, journal={The Theory of dynamical systems and its
applications to nonlinear problems}, year={1984}, place={Kyoto},
pages={4--26}, publisher={World Sci. Singapure}}
\bib{Ma}{article}{
author={J. L. Massera},
title={The meaning of stability},
journal={Bol. Fac. Ingen. Agrimens. Montevideo},
number={8},
year={1964},
pages={405--429}}
\bib{Pa}{article}{
author={M. Paternain},
title={Expansive flows and the fundamental group},
journal={Bull. Braz. Math. Soc.},
number={2},
volume={24},
pages={179--199},
year={1993},}
\bib{Th}{article}{
author={R. F. Thomas},
title={Topological stability: some fundamental properties}, 
journal={J. Diff. Eq.},
year={1985},
pages={103--122},
volume={59}}
\end{biblist}
\end{bibdiv}

\end{document}